\newtheorem{theorem}{Theorem}[subsection]
\newtheorem{prop}[theorem]{Proposition}
\newtheorem{defn}[theorem]{Definition}
\newtheorem{coro}[theorem]{Corollary}
\newtheorem{rem}[theorem]{Remark}
\newtheorem{exam}[theorem]{Example}
\begin{document}
\setlength{\oddsidemargin}{0cm}
\setlength{\evensidemargin}{0cm}

\title{Cohomologies and Deformations of Generalized Left-symmetric Algebras}

\author{Runxuan Zhang}

\address{School of Mathematics and Statistics, Northeast Normal University,
 Changchun 130024, P.R. China}

\email{zhangrx728@nenu.edu.cn}

\date{\today}

\def\shorttitle{Cohomologies and Deformations of Generalized Left-symmetric Algebras}

\begin{abstract}
The purpose of this paper is to develop a cohomology and deformation theories for generalized left-symmetric algebras.
We introduce the notions of generalized left-symmetric cohomology and deformation. We also generalize a theorem of Dzhumadil'daev on
connections between the right-symmetric cohomology and  Chevalley-Eilenberg cohomology.
As an application, we obtain a factorization theorem  in left-symmetric superalgebras cohomology.
Finally, we obtain all complex simple
left-symmetric superalgebras of dimension 3 by the infinitesimal deformations of
a given left-symmetric superalgebras.
\end{abstract}

\subjclass[2010]{17B56,17B70,17D25.}

\keywords{cohomology; deformation; generalized left-symmetric algebras.}

\maketitle
\baselineskip=16pt

\section{Introduction}
\setcounter{equation}{0}
\renewcommand{\theequation}
{1.\arabic{equation}}

\setcounter{theorem}{0}
\renewcommand{\thetheorem}
{1.\arabic{theorem}}

The supersymmetry theory is an active  and interesting   research object of mathematics and mathematical physics; its mathematical foundation
is the theory of Lie superalgebras  as well as
of Lie supergroups and of supermanifolds. The generalized Lie algebras,
which include Lie algebras and Lie superalgebras as special cases, have been described in \cite{Ree1960,Sch1979,Sch1983,Kle1985,Koc2008} from algebraic point of view and has been introduced into physics in 1978 (\cite{RW1978}).

Let $\Gamma$ be an abelian group and $k^{\ast}$ denote  the multiplicative group of a field $k$. Then a mapping
$\epsilon:\Gamma\times\Gamma\rightarrow k^{\ast}$ is called  a \textit{commutation factor} on $\Gamma$ if for all
$\alpha,\beta,\gamma\in \Gamma$,
\begin{eqnarray}
\epsilon(\alpha,\beta)\epsilon(\beta,\alpha)&=&1,\\
\epsilon(\alpha,\beta+\gamma)&=&\epsilon(\alpha,\beta)\epsilon(\alpha,\gamma),\\
\epsilon(\alpha+\beta,\gamma)&=&\epsilon(\alpha,\gamma)\epsilon(\beta,\gamma).
\end{eqnarray}
A $\Gamma$-graded
algebra $L=\oplus_{\gamma\in\Gamma}L_{\gamma}$ with the  multiplication $(x,y)\mapsto [x,y]$ is called a \textit{generalized Lie algebra} (or
\textit{$\epsilon$-Lie algebra}) if the following identities are satisfied:
\begin{eqnarray}
[x,y]+\epsilon(\alpha,\beta)[y,x]&=&0~~(\textrm{$\epsilon$-skew symmetry}),\\ \epsilon(\alpha,\gamma)[x,[y,z]]+\textrm{cyclic}&=&0~~(\textrm{$\epsilon$-Jacobi identity}),
\end{eqnarray}
for all $x\in
L_{\alpha}, y\in L_{\beta}, z\in L_{\gamma}, \alpha,\beta,\gamma\in \Gamma.$ On the other hand,
a non-associative
algebra $S$ with the multiplication $(x,y)\mapsto x\cdot y$ is called  a
\textit{left-symmetric algebra } (or \textit{pre-Lie algebra}) (\cite{Bau1999,Bur2006}) if the following identity is satisfied:
\begin{equation}
(x\cdot y)\cdot z-x\cdot(y\cdot z)=(y\cdot
x)\cdot z-y\cdot(x\cdot z)
\end{equation}
for all $x,y, z\in S.$
Since left-symmetric algebras are a kind of Lie admissible algebras, there are some close connections between left-symmetric algebra  and Lie algebra theories.
For example, if $G$ is a connected and simply connected Lie group with Lie algebra $\mathfrak{g}$, then the isomorphism classes of left-symmetric algebraic structures on
$\mathfrak{g}$ correspond to left-invariant flat affine structures on $G$.

The purpose of this paper is to introduce the notion of generalized left-symmetric algebras and to investigate their cohomology and deformation theories. The cohomology theory of Lie algebras as an important tool originated in the works of Cartan and was  developed   by Chevalley and Eilenberg \cite{CE1948}, Koszul \cite{Kos1950}, and Hochschild and Serre \cite{HS1953}.
Moreover, the cohomological constructions of Lie superalgebra and generalized Lie algebra first appeared in \cite{Lei1975} and \cite{MT1984} respectively.
(see \cite{Bin1986,FL1984,SZ1998,Lec1987} for the calculations and applications of  generalized Lie algebra cohomology.)
In 1999, Dzhumadil'daev \cite{Dzh1999} has defined cohomology groups
$H^{n}_{\textrm{rsym}}(S,M)$
for a right-symmetric algebra $S$ and an antisymmetric  module $M$ and moreover, it was proved that the right-symmetric cohomology can
be deduced from corresponding Lie algebra cohomology. In this paper, we will generalize the result of Dzhumadil'daev to the generalized left-symmetry algebra cohomology case.
In particular, we show that
\begin{equation}
H_{\textrm{lsym}}^{i+1}(S,M)\cong H^{i}_{\textrm{Lie}}(\mathfrak{g}_{S}, C^{1}(S,M)) \textrm{ for all } i>0,
\end{equation}
where $\mathfrak{g}_{s}$ denotes the associated $\epsilon$-Lie algebra of $S$.
Applying the Hochschild-Serre factorization theorem   in Lie superalgebras (\cite{Bin1986}), we obtain
a factorization theorem (Theorem 4.3) in left-symmetric superalgebras cohomology.

The formal
deformation theory was introduced  by Gerstenhaber (\cite{Ger}) for
associative algebras in 1960'. The fundamental results of Gerstenhaber's
theory connect deformation theory with the suitable cohomology
groups. Nowadays deformation-theoretic ideas penetrate most aspects
of both mathematics and physics and cut to the core of theoretical
and computational problems (\cite{BL1990,Fia1986,FdM2005,FP2007,GO1993,NR1967,Lev1970,HM2001,ZHB2011}).
In this paper, we also develop a formal deformation theory for generalized left-symmetric algebras.
As an application, we obtain all complex simple
left-symmetric superalgebras of dimension 3 by the infinitesimal deformations of
a given 3-dimensional left-symmetric superalgebras.

The present paper is organized as follows.
In Section 2, we introduce the notions of the generalized left-symmetric algebra and its bimodule. A few examples are presented.
We also define a generalized left-symmetric bimodule structure on the tensor product $M\otimes N$ of two bimodules $M$ and $N$. In particular, one can endow
$C^{i+1}(S,M)=\textrm{Hom}(\wedge_{\epsilon}^{i}S\otimes S,M),i\geq 0$ with an antisymmetric bimodule structure.
In Section 3, we  prove that   $\tilde{C}(S,M)=\oplus_{i\geq0}C^{i+1}(S,M)$
has the structure of an  $\epsilon$-pre-simplicial cochain complex. The notions of generalized
left-symmetric coboundary, cocycle and cohomology spaces are defined.
Section 4 deals with a generalization of  a theorem due to Dzhumadil'daev on
connections between the left-symmetric cohomology and  Chevalley-Eilenberg cohomology.
In Section 5, the  generalized left-symmetric deformations are introduced in the sense of Gerstenhaber.
We  determine all  complex simple left-symmetric superalgebras that can
be obtained by the infinitesimal deformations of a given simple
left-symmetric superalgebra of dimension 3. They are just
 all  simple left-symmetric superalgebras of dimension 3 in \cite{Zha2011} (or see \cite{ZB2012}).

Throughout this paper all vector spaces and (super)algebras  are assumed to be finite-dimensional over a field $k$ of characteristic zero.

\section{Generalized Left-symmetric Algebras and Bimodules}

\setcounter{equation}{0}
\renewcommand{\theequation}
{2.\arabic{equation}}

\setcounter{theorem}{0}
\renewcommand{\thetheorem}
{2.\arabic{theorem}}

We first introduce the notion of generalized left-symmetric algebra.

\begin{defn}{\rm
Let $\Gamma$ be an abelian group and $\epsilon$ be a commutation factor on $\Gamma$. A $\Gamma$-graded nonassociative
algebra $S=\oplus_{\gamma\in\Gamma}S_{\gamma}$ with the  multiplication $(x,y)\mapsto x\cdot y$ satisfying
\begin{equation}
S_\alpha\cdot S_\beta\subseteq S_{\alpha+\beta},
\end{equation}is called a \textit{generalized left-symmetric algebra} (or a
\textit{generalized left-symmetric algebra}) if the following identity is satisfied:
\begin{equation}
(x\cdot y)\cdot z-x\cdot(y\cdot z)=\epsilon(\alpha,\beta)((y\cdot
x)\cdot z-y\cdot(x\cdot z))
\end{equation}
for all $x\in
S_{\alpha}, y\in S_{\beta}, z\in S, \alpha,\beta\in \Gamma.$  $(x,y,z):=(x\cdot y)\cdot z-x\cdot(y\cdot z)$ is  called the \textit{associator} of
$S$.
}
\end{defn}

\begin{exam}{\rm 1. If we choose $\epsilon$ to be the trivial commutation factor, that is $\epsilon(\alpha,\beta)=1$ for all $\alpha,\beta\in \Gamma$. Then a $\Gamma$-graded generalized left-symmetric algebra is nothing but a $\Gamma$-graded left-symmetric algebra.

2. If $\Gamma=\textbf{Z}_{2}$ is the additive group of integers modulo 2 and $\epsilon(\alpha,\beta):=(-1)^{\alpha\beta}$ for all
 $\alpha,\beta\in \textbf{Z}_{2}$, then $\textbf{Z}_{2}$-graded generalized left-symmetric algebras are just left-symmetric superalgebras (see \cite{Zha2013} or \cite{KB2008}).

3. Let $V$ be a $\Gamma$-graded vector space and $\epsilon$ be an arbitrary  commutation factor on $\Gamma$. Then the sum of the subspaces of
all homogeneous  $k$-linear mappings from $V$ to $V$ with degree $\gamma$
\begin{equation}
\textrm{Hom}(V,V):=\oplus_{\gamma\in\Gamma}\textrm{Hom}(V,V)_{\gamma},
\end{equation}
 is a $\Gamma$-graded generalized left-symmetric algebra with respect to  the usual  composition of linear mappings. In fact, it is a $\Gamma$-graded associative algebra and
 denoted by $gl(V,\epsilon)$.
 }
\end{exam}

A generalized left-symmetric algebra $S$ is an \textit{$\epsilon$-Lie admissible} algebra, that is, for all $x\in S_{\alpha},y\in S_{\beta}$ and
$\alpha,\beta\in \Gamma$, the \textit{$\epsilon$-commutator }
\begin{equation}
[x,y]:=x\cdot y-\epsilon(\alpha,\beta)y\cdot x
\end{equation}
makes $S$ to become an $\epsilon$-Lie algebra, which
is called the \textit{associated} $\epsilon$-Lie algebra of $S$ and denoted by $\mathfrak{g}_{S}$.

\begin{rem}{\rm
We notice  that every generalized left-symmetric algebra $S$ has a natural $\textbf{Z}_{2}$-gradation. In fact, it follows from (1.1) that the mapping
\begin{equation}
\phi:\Gamma\rightarrow k^{\ast},~~ \alpha\mapsto \epsilon(\alpha,\alpha)
\end{equation}
is a homomorphism of groups and for  all $\alpha$ in $\Gamma$, we have $\epsilon(\alpha,\alpha)=\pm 1$.
Let us define
\begin{eqnarray}
\Gamma_{\bar{0}}&:=&\{\alpha\in \Gamma| \epsilon(\alpha,\alpha)=1\},\\
\Gamma_{\bar{1}}&:=&\{\alpha\in \Gamma| \epsilon(\alpha,\alpha)=-1\}.
\end{eqnarray} Then $\Gamma_{\bar{0}}=\textrm{Ker} \phi$ is a subgroup of $\Gamma$ and it follows that
the decomposition
\begin{equation}
S=S_{\bar{0}}\oplus S_{\bar{1}},
\end{equation}is a $\textbf{Z}_{2}$-gradation of $S$,
where $S_{i}:=\oplus_{\alpha\in \Gamma_{i}} S_{\alpha}$ for $i=\bar{0},\bar{1}$.
}\end{rem}

\begin{defn}{\rm
A $\Gamma$-graded vector space $M$ is said to be  a \textit{bimodule} over a generalized left-symmetric algebra $S$
if it is endowed with a left action $A\times M\rightarrow M,(x,m)\mapsto x\cdot m$ and a right action $M\times A\rightarrow M,(m,x)\mapsto m\cdot x$ such that
\begin{eqnarray}
(x\cdot y)\cdot m-x\cdot(y\cdot m)&=&\epsilon(\alpha,\beta)((y\cdot x)\cdot m-y\cdot(x\cdot m))\\
(x\cdot m)\cdot y-x\cdot(m\cdot y)&=&\epsilon(\alpha,\gamma)((m\cdot
x)\cdot y-m\cdot(x\cdot y))
\end{eqnarray} for all $x\in S_{\alpha},y\in S_{\beta},m\in M_{\gamma}$ and
$\alpha,\beta,\gamma\in \Gamma$.
}
\end{defn}

\begin{defn}{\rm
A generalized left-symmetric  $S$-bimodule $M$ is said to be \textit{antisymmetric} if the right action of $S$ is trivial, that is
$m\cdot x=0$ for all $x\in S$ and $m\in M$.

A generalized left-symmetric  $S$-bimodule $M$ is said to be \textit{special} if the left action of $S$ is associative, that is,
$(x\cdot y)\cdot m-x\cdot(y\cdot m)=0$  for all $x,y\in S$ and $m\in M$.
}
\end{defn}

\begin{exam}{\rm
Any  generalized left-symmetric algebra $S$ can be endowed with a natural $S$-bimodule structure,
$(x,m)\mapsto x\cdot m,(m,x)\mapsto m\cdot x$ for all $x,m\in S$. In this case, $S$ is called the \textit{regular} bimodule.
Moreover, it is easy to check that  the regular bimodule of $gl(V,\epsilon)$ is special.
}
\end{exam}

\begin{prop}
Let $S$ be a generalized left-symmetric algebra and $M$ be an $S$-bimodule. Then the graded space of $k$-linear maps
{\rm\begin{equation}
C^{1}(S,M):=\textrm{Hom}(S,M)=\oplus_{\gamma\in\Gamma} \textrm{Hom}_{\gamma}(S,M)
\end{equation}}
is an antisymmetric $S$-bimodule if we define the left action as follows:
\begin{equation}
(x\cdot f)(y):=x\cdot f(y)-\epsilon(\alpha,\gamma)f(x\cdot y)+\epsilon(\alpha,\gamma)f(x)\cdot y,
\end{equation}
where {\rm $x\in S_{\alpha},y\in S_{\beta},f\in \textrm{Hom}_{\gamma}(S,M)$} and $\alpha,\beta,\gamma\in \Gamma$.
\end{prop}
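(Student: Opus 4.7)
The plan is a direct computational verification. Since antisymmetry of the bimodule means the right $S$-action is trivial, I would first declare $f\cdot x:=0$ for all $x\in S$ and $f\in C^{1}(S,M)$. Under this choice, the right bimodule axiom (2.10) holds vacuously, so what remains is to verify the left bimodule axiom (2.9), namely
$$(x\cdot y)\cdot f - x\cdot(y\cdot f) = \epsilon(\alpha,\beta)\bigl((y\cdot x)\cdot f - y\cdot(x\cdot f)\bigr)$$
for all homogeneous $x\in S_\alpha$, $y\in S_\beta$, $f\in \mathrm{Hom}_\gamma(S,M)$.

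I would evaluate both sides at an arbitrary homogeneous element $z\in S_\delta$ and iteratively apply the defining formula (2.12). Each side then expands into a sum of roughly a dozen terms, which I would partition according to the position and argument of $f$. Type (I) terms, of the form $u\cdot(v\cdot f(z))$ together with the ``inner'' terms $f(u\cdot(v\cdot z))$ and $f((u\cdot v)\cdot z)$ for $u,v\in\{x,y\}$, are handled by the left module axiom (2.9) for $M$ applied with $m=f(z)$, together with the generalized left-symmetric identity (2.2) for $S$ itself (inside the argument of $f$). Type (II) terms, of the form $(f(u)\cdot v)\cdot z$ versus $f(u)\cdot(v\cdot z)$ for $u\in\{x,y\}$, are handled by the right module axiom (2.10) for $M$ applied with $m=f(u)$. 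Type (III) ``cross'' terms of the form $u\cdot f(v\cdot z)$ and $u\cdot(f(v)\cdot z)$ appear on both sides and match directly after one more application of (2.10) with $m=f(v)$.

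Throughout the calculation the commutation scalars must be brought to a common normal form. I would use the bimultiplicativity relations (1.2) and (1.3) repeatedly, for example $\epsilon(\alpha+\beta,\gamma)=\epsilon(\alpha,\gamma)\epsilon(\beta,\gamma)$ and $\epsilon(\alpha,\beta+\gamma)=\epsilon(\alpha,\beta)\epsilon(\alpha,\gamma)$, to rewrite every coefficient as a product of powers of $\epsilon(\alpha,\beta)$, $\epsilon(\alpha,\gamma)$, $\epsilon(\beta,\gamma)$, $\epsilon(\alpha,\delta)$ and $\epsilon(\beta,\delta)$, so that the two sides can be compared monomial by monomial.

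The main obstacle is clerical rather than conceptual: with some twelve terms per side, each carrying a product of $\epsilon$-scalars, the matching has to be done carefully, and the discipline of sorting into the three types above plus normalising the scalars at the outset is what makes the bookkeeping tractable. Once this is done, the identity reduces cleanly to the bimodule axioms (2.9), (2.10) for $M$ and the left-symmetric identity (2.2) for $S$, completing the proof.
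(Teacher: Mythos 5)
Your proposal is correct and follows essentially the same route as the paper: set the right action to zero (so the second bimodule axiom is trivially satisfied) and verify the left axiom by expanding $((x\cdot y)\cdot f)(z)$, $(x\cdot(y\cdot f))(z)$ and their $x\leftrightarrow y$ counterparts via the defining formula, then match terms using the bimodule axioms for $M$, the left-symmetric identity in $S$, and the bimultiplicativity of $\epsilon$. The paper's proof is exactly this direct computation; your sorting of the terms into three types is just a more explicit organization of the same bookkeeping.
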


\begin{proof}
Let $f\in \textrm{Hom}_{\gamma}(S,M)$ and $x,y,z$ belong to $S_{\alpha},S_{\beta},S$ respectively, then
\begin{eqnarray*}
((x\cdot y)\cdot f)(z)&=&(x\cdot y)\cdot f(z)-\epsilon(\alpha+\beta,\gamma)f((x\cdot y)\cdot z)+\epsilon(\alpha+\beta,\gamma)f(x\cdot y)\cdot z,\\
((y\cdot x)\cdot f)(z)&=&(y\cdot x)\cdot f(z)-\epsilon(\alpha+\beta,\gamma)f((y\cdot x)\cdot z)+\epsilon(\alpha+\beta,\gamma)f(y\cdot x)\cdot z,\\
\end{eqnarray*} and
\begin{eqnarray*}
&&(x\cdot(y\cdot f))(z)\\
&=&x\cdot ((y\cdot f)(z))-\epsilon(\alpha,\beta+\gamma)(y\cdot f)(x\cdot z)+\epsilon(\alpha,\beta+\gamma)((y\cdot f)(x))\cdot z,\\
&=&x\cdot \big(y\cdot f(z)-\epsilon(\beta,\gamma)f(y\cdot z)+\epsilon(\beta,\gamma)f(y)\cdot z\big)-\\
&&\epsilon(\alpha,\beta+\gamma)(y\cdot f(x\cdot z)-\epsilon(\beta,\gamma)f(y\cdot (x\cdot z))+\epsilon(\beta,\gamma)f(y)\cdot(x\cdot z))+\\
&&\epsilon(\alpha,\beta+\gamma)(y\cdot f(x)-\epsilon(\beta,\gamma)f(y\cdot x)+\epsilon(\beta,\gamma)f(y)\cdot x)\cdot z.
\end{eqnarray*}
Similarly, we have
\begin{eqnarray*}
&&(y\cdot(x\cdot f))(z)\\
&=&y\cdot \big(x\cdot f(z)-\epsilon(\alpha,\gamma)f(x\cdot z)+\epsilon(\alpha,\gamma)f(x)\cdot z\big)-\\
&&\epsilon(\beta,\alpha+\gamma)(x\cdot f(y\cdot z)-\epsilon(\alpha,\gamma)f(x\cdot(y\cdot z))+\epsilon(\alpha,\gamma)f(x)\cdot(y\cdot z))+\\
&&\epsilon(\beta,\alpha+\gamma)(x\cdot f(y)-\epsilon(\alpha,\gamma)f(x\cdot y)+\epsilon(\alpha,\gamma)f(x)\cdot y)\cdot z.
\end{eqnarray*}
The direct computation shows that  $$((x\cdot y)\cdot f)(z)-(x\cdot(y\cdot f))(z)=\epsilon(\alpha,\beta)(((y\cdot x)\cdot f)(z)-(y\cdot(x\cdot f))(z)).$$
Hence, $C^{1}(S,M)$ is an antisymmetric $S$-bimodule.
\end{proof}

\begin{defn}{\rm If $\mathfrak{g}$ is an $\epsilon$-Lie algebra over a field $k$, then
a \textit{left $\mathfrak{g}$-module} is a $\Gamma$-graded vector space $M$  together with a multiplication
$\mathfrak{g}\times M\rightarrow M, (x,m)\mapsto [x,m]$ satisfying the axioms:

(1)~~ $(x,m)\mapsto [x,m]$ is linear in $x$ and in $m$;

(2)~~ $[[x,y],m]=[x,[y,m]]-\epsilon(\alpha,\beta)[y,[x,m]]$ for all $x\in \mathfrak{g}_{\alpha}$ and $y\in \mathfrak{g}_{\beta}$.
}
\end{defn}

A generalized left-symmetric $S$-bimodule $M$ can be endowed with an $\epsilon$-Lie module structure over $\mathfrak{g}_{S}$
by the  action
\begin{equation}
[x,m]:=x\cdot m-\epsilon(\beta,\alpha)m\cdot x
\end{equation}
for all $x\in \mathfrak{g}_{\alpha}$ and $m\in M_{\beta}$.
In this situation, we denote the associated  $\epsilon$-Lie module by $\mathfrak{M}$. Thus if one wants to define a generalized left-symmetric $S$-bimodule
structure over a $\Gamma$-graded vector space $M$, we should first give a  left $\epsilon$-Lie module on $M$ over $\mathfrak{g}_{S}$ and
endow it with a right action that satisfies the conditions in the definition of  $S$-bimodule.

\begin{prop}
Let  $M$ and $N$ be two generalized left-symmetric $S$-bimodules. Then the tensor product
$M\otimes N$ is an  $S$-bimodule if we define the left and  right actions as follows:
\begin{eqnarray}
x\cdot(m\otimes n)&:=&(x\cdot m-\epsilon(\alpha,\beta)m\cdot x)\otimes n+\epsilon(\alpha,\beta)m\otimes x\cdot n\\
(m\otimes n)\cdot x&:=& m\otimes n\cdot x,
\end{eqnarray}
where $x,m,n$ belong to $S_{\alpha},M_{\beta},N$ respectively.
\end{prop}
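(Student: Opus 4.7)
The plan is to verify the two bimodule axioms (2.9) and (2.10) for $M \otimes N$ by direct expansion, exploiting that the right action touches only the second factor while the left action is a hybrid of the $\epsilon$-commutator on the first factor and the left $S$-action on the second.

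First I will verify axiom (2.10). Take $x \in S_\alpha$, $y \in S$, $m \in M_\beta$, $n \in N_\gamma$. Since $(m \otimes n) \cdot y = m \otimes (n \cdot y)$, expanding both sides causes the $(\cdot) \otimes n \cdot y$-part of the left action to cancel between $(x \cdot (m \otimes n)) \cdot y$ and $x \cdot ((m \otimes n) \cdot y)$, leaving $\epsilon(\alpha,\beta)\, m \otimes ((x \cdot n) \cdot y - x \cdot (n \cdot y))$ on the left-hand side. The right-hand side of (2.10) simplifies to $\epsilon(\alpha, \beta+\gamma)\, m \otimes ((n \cdot x) \cdot y - n \cdot (x \cdot y))$. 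Applying (1.2) to rewrite $\epsilon(\alpha, \beta+\gamma) = \epsilon(\alpha,\beta)\epsilon(\alpha,\gamma)$, the identity reduces precisely to axiom (2.10) for $N$.

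For axiom (2.9), fix $x \in S_\alpha$, $y \in S_\beta$, $m \in M_\gamma$, $n \in N$ and expand the left action twice in both $(x \cdot y) \cdot (m \otimes n) - x \cdot (y \cdot (m \otimes n))$ and in its $x \leftrightarrow y$ swap. The resulting expression splits by tensor type into three independent parts: a ``$(\cdot) \otimes n$'' part, an ``$m \otimes (\cdot)$'' part, and a ``mixed'' part consisting of terms of the form $(y \cdot m - \epsilon(\beta,\gamma) m \cdot y) \otimes (x \cdot n)$ and $(x \cdot m - \epsilon(\alpha,\gamma) m \cdot x) \otimes (y \cdot n)$. The $m \otimes (\cdot)$ part, after factoring out $\epsilon(\alpha,\gamma)\epsilon(\beta,\gamma)$, reduces verbatim to axiom (2.9) for $N$. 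The mixed part cancels between the two sides using only the commutation-factor identities (1.1)--(1.3). The $(\cdot) \otimes n$ part, after using axiom (2.9) for $M$ to kill the pure associator difference $((x \cdot y) \cdot m - x \cdot (y \cdot m)) - \epsilon(\alpha,\beta)((y \cdot x) \cdot m - y \cdot (x \cdot m))$, leaves two symmetric groups of terms of the form $(x \cdot m) \cdot y - x \cdot (m \cdot y) - \epsilon(\alpha,\gamma)((m \cdot x) \cdot y - m \cdot (x \cdot y))$ and its $x \leftrightarrow y$ analogue, each weighted by an appropriate commutation factor; both vanish by axiom (2.10) applied inside $M$.

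The main obstacle is purely the bookkeeping of $\epsilon$-factors. On each side of axiom (2.9) about a dozen monomial terms arise, and each must be labeled with the correct factor. The subtle point is that when the left action is applied a second time, the first tensor factor has shifted degree $\beta + \gamma$, so the factor $\epsilon(\alpha, \beta+\gamma)$ it produces must be split into $\epsilon(\alpha,\beta)\epsilon(\alpha,\gamma)$ via (1.2) at the right moments in order for the reduction to the bimodule axioms of $M$ and $N$ to go through. No further ideas are needed beyond the identities already listed in the excerpt.
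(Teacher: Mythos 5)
Your proposal is correct and follows essentially the same route as the paper: a direct verification of axioms (2.9) and (2.10) by expanding the actions on $M\otimes N$. In fact your organization is more informative than the paper's, which merely writes out the four expansions and asserts that ``the direct computation yields the desired result,'' whereas you correctly identify that the terms split by tensor type into a part killed by axiom (2.9) for $N$, a mixed part cancelling via the commutation-factor identities, and a part killed by axioms (2.9) and (2.10) for $M$.
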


\begin{proof}
For arbitrary $x\in S_{\alpha},y\in S_{\beta},m\in M_{\gamma}$ and $n\in N_{\delta}$, it is clear that
$$(x\cdot (m\otimes n))\cdot y-x\cdot ((m\otimes n)\cdot y)=\epsilon(\alpha,\gamma+\delta)(((m\otimes n)\cdot x)\cdot y-(m\otimes n)\cdot(x\cdot y)).$$ Next it suffices to check that
$$(x\cdot y)\cdot (m\otimes n)-x\cdot(y\cdot(m\otimes n))=\epsilon(\alpha,\beta)((y\cdot x)\cdot(m\otimes n)-y\cdot(x\cdot(m\otimes n))).$$
In fact, \begin{eqnarray*}
(x\cdot y)\cdot(m\otimes n)&=&(x\cdot y)\cdot m\otimes n-\epsilon(\alpha+\beta,\gamma)m\cdot (x\cdot y)\otimes n+\epsilon(\alpha+\beta,\gamma)m\otimes (x\cdot y)\cdot n
\end{eqnarray*} and
\begin{eqnarray*}
&&x\cdot (y\cdot(m\otimes n))\\
&=&x\cdot (y\cdot m\otimes n-\epsilon(\beta,\gamma)m\cdot y\otimes n+\epsilon(\beta,\gamma)m\otimes y\cdot n)\\
&=&x\cdot (y\cdot m)\otimes n-\epsilon(\alpha,\beta+\gamma)(y\cdot m)\cdot x\otimes n+\epsilon(\alpha,\beta+\gamma)y\cdot m\otimes x\cdot n-\\
&&\epsilon(\beta,\gamma)(x\cdot (m\cdot y)\otimes n-\epsilon(\alpha,\beta+\gamma)(m\cdot y)\cdot x\otimes n+\epsilon(\alpha,\beta+\gamma)m\cdot y\otimes x\cdot n)+\\
&&\epsilon(\beta,\gamma)(x\cdot m\otimes y\cdot n-\epsilon(\alpha,\gamma)m\cdot x\otimes y\cdot n+\epsilon(\alpha,\gamma)m\otimes x\cdot (y\cdot n)).
\end{eqnarray*}
Similarly, we have
$$(y\cdot x)\cdot(m\otimes n)=(y\cdot x)\cdot m\otimes n-\epsilon(\alpha+\beta,\gamma)m\cdot (y\cdot x)\otimes n+\epsilon(\alpha+\beta,\gamma)m\otimes (y\cdot x)\cdot n$$ and
\begin{eqnarray*}
&&y\cdot (x\cdot(m\otimes n))\\
&=&y\cdot(x\cdot m)\otimes n-\epsilon(\beta,\alpha+\gamma)(x\cdot m)\cdot y\otimes n+\epsilon(\beta,\alpha+\gamma)x\cdot m\otimes y\cdot n-\\
&&\epsilon(\alpha,\gamma)(y\cdot(m\cdot x)\otimes n-\epsilon(\beta,\alpha+\gamma)(m\cdot x)\cdot y\otimes n+\epsilon(\beta,\alpha+\gamma)m\cdot x\otimes y\cdot n)+\\
&&\epsilon(\alpha,\gamma)(y\cdot m\otimes x\cdot n-\epsilon(\beta,\gamma)m\cdot y\otimes x\cdot n+\epsilon(\beta,\gamma)m\otimes y\cdot (x\cdot n)).
\end{eqnarray*}
The direct computation yields  the desired result.
\end{proof}

Let $V$ be a $\Gamma$-graded vector space, then the tensor algebra (see \cite{Sch1983})
\begin{equation}
T(V):=\oplus_{i\in\textbf{Z}}T_{i}(V)
\end{equation} is a $\textbf{Z}\times \Gamma$-graded associative algebra, where $T_{i}(V)=\{0\}$ for all $i<0$, $T_{0}(V)=k$ and $$T_{i}(V)=\underbrace{V\otimes\cdots\otimes V}_{i\textrm{ factors}}$$ for all $i>0$.
Let
$\mathcal{I}(V,\epsilon)$ denote the two-sided ideal of $T(V)$ generated by the elements of the form
\begin{equation}
m\otimes n+\epsilon(\alpha,\beta)n\otimes m,~~m\in V_{\alpha},n\in V_{\beta}.
\end{equation}
Then the quotient algebra $\wedge_{\epsilon}V:=T(V)/\mathcal{I}(V,\epsilon)$ is called the $\epsilon$-\textit{exterior algebra} of $V$ and the multiplication in $\wedge_{\epsilon}V$ is denoted by $\wedge_{\epsilon}$.
On the other hand, $\mathcal{I}(V,\epsilon)$ is a $\textbf{Z}\times \Gamma$-graded ideal of $T(V)$ and thus the quotient algebra $\wedge_{\epsilon}V$
inherits from $T(V)$ a canonical $\textbf{Z}\times \Gamma$-gradation. In particular, if we write
$\wedge_{\epsilon}^{i}V=V\wedge_{\epsilon}\cdots\wedge_{\epsilon}V (i\textrm{ factors})$ for the canonical image of
$T_{i}(V)$ in $\wedge_{\epsilon}V$, then
\begin{equation}
\wedge_{\epsilon}V=\oplus_{i\in \textbf{Z}}\wedge_{\epsilon}^{i}V,
\end{equation}
where $\wedge_{\epsilon}^{i}V={0}$ for all $i<0$, $\wedge_{\epsilon}^{0}(V)=k$ and $\wedge_{\epsilon}^{1}(V)=V$.
Define
\begin{equation}
C^{i+1}(S,M):=\textrm{Hom} ( \wedge_{\epsilon}^{i} S\otimes S,M)=
\oplus_{\gamma\in\Gamma}\textrm{Hom}_{\gamma}( \wedge_{\epsilon}^{i} S\otimes S,M), i\geq0.
\end{equation}

\begin{prop} For each $i\geq 0$,
$C^{i+1}(S,M)$ is an antisymmetric $S$-bimodule if we endow it with the following left action:
\begin{equation}
S\times C^{i+1}(S,M)\rightarrow C^{i+1}(S,M), (x,f)\mapsto x\cdot f,
\end{equation}
where {\rm $f\in \textrm{Hom}_{\beta}(\wedge_{\epsilon}^{i} S\otimes S,M)$}, $x\in S_{\alpha}$, $x_{s}\in S_{\alpha_{s}} (s=1,\cdots,i+1)$ and
\begin{eqnarray*}
&&(x\cdot f)(x_{1},\cdots,x_{i},x_{i+1})\\
&=& x\cdot f(x_{1},\cdots,x_{i},x_{i+1})-\epsilon(\alpha,\beta+\alpha_1+\cdots+\alpha_i)f(x_{1},\cdots,x_{i},x\cdot x_{i+1})\\
&& +\epsilon(\alpha,\beta+\alpha_1+\cdots+\alpha_i) f(x_{1},\cdots,x_{i},x)\cdot x_{i+1}\\
&& -\sum_{s=1}^{i}\epsilon(\alpha,\beta+\alpha_{1}+\cdots+\alpha_{s-1}) f (x_{1},\cdots,x_{s-1},[x,x_{s}],\cdots,x_{i},x_{i+1}).\end{eqnarray*}
\end{prop}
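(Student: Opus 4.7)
My plan is to verify two things: first, that the formula for $x\cdot f$ descends from $S^{\otimes(i+1)}$ to $\wedge_\epsilon^{i}S\otimes S$ and hence defines an element of $C^{i+1}(S,M)$; and second, that the resulting left action satisfies the left-symmetric bimodule identity (2.9). The second bimodule axiom (2.10) is automatic once the right action on $C^{i+1}(S,M)$ is declared to be zero, so antisymmetry of the bimodule structure will follow by construction.

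For well-definedness I would check that $(x\cdot f)(x_1,\ldots,x_{i+1})$ is $\epsilon$-alternating in the first $i$ slots. The first three summands inherit this from $f$, since their prefactors $\epsilon(\alpha,\beta+\alpha_1+\cdots+\alpha_i)$ are invariant under transpositions of the $\alpha_j$. For the sum $\sum_{s=1}^i$, a transposition $x_s\leftrightarrow x_{s+1}$ mixes the contributions of indices $s$ and $s+1$, and the required identity is exactly the statement that $[x,-]$ extends to an $\epsilon$-graded derivation of $\wedge_\epsilon S$ --- a consequence of the $\epsilon$-Jacobi identity (1.5) for $\mathfrak{g}_S$.

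For the main identity I would expand $(x\cdot y)\cdot f$, $x\cdot(y\cdot f)$, $(y\cdot x)\cdot f$, and $y\cdot(x\cdot f)$ by plugging the definition into itself, and group the resulting terms by where $x$ and $y$ land on $f$: (a) both act on the output $f(x_1,\ldots,x_{i+1})\in M$, whereupon the bimodule axiom (2.9) for $M$ collapses the combination; (b) both act on the final slot $x_{i+1}$, where one invokes the defining identity (2.2) of $S$ inside $f$ together with (2.10) for the mixed terms of the form $f(\ldots,x)\cdot x_{i+1}$; (c) both act via commutators on internal slots $x_s$ with $s\le i$, where the $\epsilon$-Jacobi identity $[[x,y],x_s]=[x,[y,x_s]]-\epsilon(\alpha,\beta)[y,[x,x_s]]$ matches the $[x,y]$-terms with the nested-commutator terms; and (d) cross terms in which $x$ and $y$ touch different slots (or one touches a slot while the other touches the output), which pair up across (2.9) with matching commutation factors and cancel.

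The base case $i=0$ is exactly Proposition 2.7, so the genuinely new content lies in classes (c) and (d). The principal obstacle is combinatorial: tracking the nested commutation factors, which by (1.2)--(1.3) always split as products involving one argument from $\{\alpha,\beta\}$ and one partial sum of the $\alpha_j$, and confirming that the cancellations close group by group. Once the terms are sorted this way, each class vanishes for a single clean reason, so the proof reduces to systematic bookkeeping rather than ad hoc manipulation.
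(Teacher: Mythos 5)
Your proposal is correct in outline, but it takes a genuinely different route from the paper. You verify the bimodule axiom (2.9) for the explicit formula by brute-force expansion, sorting terms by where $x$ and $y$ land; the paper instead avoids almost all of this computation by a structural reduction: it introduces the degree-zero isomorphism $\eta:C^{i}(\mathfrak{g}_{S},k)\otimes C^{1}(S,M)\rightarrow C^{i+1}(S,M)$, observes that $C^{i}(\mathfrak{g}_{S},k)$ carries the standard $\epsilon$-Lie module structure (hence an antisymmetric $S$-bimodule structure) and $C^{1}(S,M)$ carries the antisymmetric bimodule structure of Proposition 2.7, equips the tensor product with the bimodule structure of Proposition 2.9, and then only checks the single identity $\eta(x\cdot(g\otimes h))=x\cdot\eta(g\otimes h)$. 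The paper's route buys two things: the heavy cancellation work is already done once and for all in Propositions 2.7 and 2.9, and well-definedness on $\wedge_{\epsilon}^{i}S\otimes S$ is automatic because everything is defined on $\wedge_{\epsilon}^{i}\mathfrak{g}_{S}$ from the start. Your route is self-contained and makes the formula's provenance transparent, at the cost of a substantially longer bookkeeping argument whose classes (b)--(d) you would still need to write out in full; note also one small misattribution: the descent of the term $\sum_{s}\epsilon(\alpha,\beta+\alpha_{1}+\cdots+\alpha_{s-1})f(\ldots,[x,x_{s}],\ldots)$ to $\wedge_{\epsilon}^{i}S\otimes S$ only uses the bicharacter identities (1.1)--(1.3) (any homogeneous map extends to an $\epsilon$-derivation of $\wedge_{\epsilon}S$); the $\epsilon$-Jacobi identity is what you need later, in your class (c), for the module axiom itself.
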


\begin{proof}
Let $\mathfrak{g}_{S}$  denote the associated $\epsilon$-Lie algebra.
Note that for each $i\geq 0$, there is an isomorphism $\eta$ which is homogeneous of $\Gamma$-degree zero of vector spaces:
$$\eta:  C^{i}(\mathfrak{g}_{S},k)\otimes C^{1}(S,M)\rightarrow C^{i+1}(S,M),~~g\otimes h\mapsto \eta(g\otimes h),$$
where
\begin{equation}
C^{i}(\mathfrak{g}_{S},k):=\textrm{Hom} (\wedge_{\epsilon}^{i} \mathfrak{g}_{S},k)=
\oplus_{\gamma\in\Gamma}\textrm{Hom}_{\gamma}(\wedge_{\epsilon}^{i} \mathfrak{g}_{S},k), i\geq0
\end{equation} is an $\epsilon$-Lie module of $\mathfrak{g}_{S}$ (see \cite{SZ1998} for details) and for each $g\in \textrm{Hom}_{\gamma}(\wedge_{\epsilon}^{i} \mathfrak{g}_{S},k),$ $ h\in \textrm{Hom}_\delta(S,M)$,
$$\eta(g\otimes h)(x_{1},\cdots,x_{i},x_{i+1}):=\epsilon(\delta,\alpha_1+\cdots+\alpha_{i})g(x_{1},\cdots,x_{i})h(x_{i+1}).$$
Thus the antisymmetric generalized left-symmetric bimodule structure on $C^{i}(\mathfrak{g}_{S},k)$ $\otimes C^{1}(S,M)$ will give an antisymmetric
generalized left-symmetric bimodule structure on $C^{i+1}(S,M)$ by the transformation  of $\eta$.
On the other hand, the antisymmetric $S$-bimodule structure on $ C^{1}(S,M)$ is constructed in Proposition 2.7 and the
$\epsilon$-Lie module structure on $C^{i}(\mathfrak{g}_{S},k)$ over $\mathfrak{g}_{S}$ is well known. Hence for
$h\in \textrm{Hom}_{\delta}(S,M)$ and $g\in \textrm{Hom}_{\gamma}(\wedge_{\epsilon}^{i} \mathfrak{g}_{S},k)$, we define a left action of $S$ on
$C^{i}(\mathfrak{g}_{S},k)\otimes C^{1}(S,M)$ by:
\begin{eqnarray*}
&&\eta(x\cdot (g\otimes h))(x_{1},\cdots,x_{i},x_{i+1})\\
&=&\eta([x,g]\otimes h +\epsilon(\alpha,\gamma)g\otimes x\cdot h)(x_{1},\cdots,x_{i},x_{i+1})\\
&=&-\sum_{s=1}^{i} \epsilon(\alpha,\gamma+\alpha_1+\cdots+\alpha_{s-1})\epsilon(\delta,\alpha_1+\cdots+\alpha_i) g(x_{1},\cdots,[x,x_{s}],\cdots,x_{i})h(x_{i+1})+\\
&&\epsilon(\alpha,\gamma)\epsilon(\alpha+\delta,\alpha_1+\cdots+\alpha_i)g(x_{1},\cdots,x_{i})\times\\
&&(x\cdot h(x_{i+1})-\epsilon(\alpha,\delta)h(x\cdot x_{i+1})+\epsilon(\alpha,\delta)h(x)\cdot x_{i+1}).
\end{eqnarray*}
The direct  calculations show that $\eta(x\cdot (g\otimes h))=x\cdot(\eta(g\otimes h)).$ Therefore  $(x,f)\mapsto x\cdot f$
is an antisymmetric $S$-bimodule.
\end{proof}

\section{Cohomology of Generalized Left-symmetric Algebras}

\setcounter{equation}{0}
\renewcommand{\theequation}
{3.\arabic{equation}}

\setcounter{theorem}{0}
\renewcommand{\thetheorem}
{3.\arabic{theorem}}

Let $(S,\Gamma,\epsilon)$ be a generalized left-symmetric algebra and $M$ be an $S$-bimodule.
We define
$\widetilde{C}(S,M):=\oplus_{i\geq 0}C^{i+1}(S,M),$ where $C^{i+1}(S,M)$ is defined as in (2.19).
Now we present an $\epsilon$-pre-simplicial cochain complex on $\widetilde{C}(S,M)$.
We  define homogeneous  linear mappings $D_{t}$  of degree zero with respect to the $\Gamma$-gradation on $\widetilde{C}(S,M)$ by
$D_{t}:C^{i}(S,M)\rightarrow C^{i+1}(S,M), (t=1,2,3,\cdots.)$
\begin{eqnarray}
&&(D_{t}f)(x_{1},\cdots,x_{i}, x_{i+1})\nonumber\\
&&=\epsilon(\beta+\alpha_{1}+\cdots+\alpha_{t-1},\alpha_{t})x_{t}\cdot f(x_{1},\cdots,\hat{x}_{t},\cdots,x_{i}, x_{i+1})\\
&&-\epsilon(\alpha_{t},\alpha_{t+1}+\cdots+\alpha_{i})f(x_{1},\cdots,\hat{x}_{t},\cdots,x_{i},x_t\cdot  x_{i+1})\nonumber\\
&&+\epsilon(\alpha_{t},\alpha_{t+1}+\cdots+\alpha_{i})f(x_{1},\cdots,\hat{x}_{t},\cdots,x_{i},x_t)\cdot x_{i+1}\nonumber\\
&&-\sum _{t< j}\epsilon(\alpha_{t},\alpha_{t+1}
+\cdots+\alpha_{j-1})f(x_{1},\cdots,\hat{x}_{t},\cdots,x_{j-1},[x_{t},x_{j}],\cdots,x_{i}, x_{i+1})\nonumber\end{eqnarray}
for $t\leq i, i\geq1$ and
\begin{equation}
D_tf=0 \textrm{ ~~for } t>i,
\end{equation}
where
 $f \in \textrm{Hom}_{\beta}(\wedge_{\epsilon}^{i-1} S\otimes S,M)$, $x_{s}\in S_{\alpha_{s}} (s=1,\cdots,i+1)$.
Here $\hat{x}$ means that the element $x$ is omitted.

\begin{prop}
For all $1\leq s<t$, we have $D_{t}D_{s}=D_{s}D_{t-1}$. That is, the set of $D_{i}$ endows
$\widetilde{C}(S,M)=\oplus_{i\geq 1}C^{i}(S,M)$ with an $\epsilon$-pre-simplicial structure.
\end{prop}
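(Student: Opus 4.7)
The plan is to prove the identity by direct expansion on homogeneous arguments and then match terms. Fix $f \in C^{i}(S,M)$ of $\Gamma$-degree $\beta$ and homogeneous $x_1,\ldots,x_{i+1}$ with $x_r \in S_{\alpha_r}$. I would evaluate both $(D_tD_sf)(x_1,\ldots,x_{i+1})$ and $(D_sD_{t-1}f)(x_1,\ldots,x_{i+1})$ using (3.1), and then match the resulting expressions summand by summand. It is useful to abbreviate $\varepsilon_{a,b} := \alpha_a + \alpha_{a+1} + \cdots + \alpha_b$ for the partial sums of degrees that govern the sign factors appearing in (3.1).

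To organize the bookkeeping I would decompose the definition as $D_r = L_r + P_r + R_r + B_r$, where $L_r$ is the left-action summand $\epsilon(\beta+\varepsilon_{1,r-1},\alpha_r)\,x_r\cdot f(\ldots,\hat{x}_r,\ldots)$, $P_r$ is the ``last-slot'' substitution $-\epsilon(\alpha_r,\varepsilon_{r+1,i})f(\ldots,\hat{x}_r,\ldots,x_r\cdot x_{i+1})$, $R_r$ is the right-action $\epsilon(\alpha_r,\varepsilon_{r+1,i})f(\ldots,\hat{x}_r,\ldots,x_r)\cdot x_{i+1}$, and $B_r$ is the bracket sum over $j>r$. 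The composite $D_tD_s$ then splits into sixteen families $X_tY_s$ with $X,Y\in\{L,P,R,B\}$, and similarly for $D_sD_{t-1}$.

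The matchings then fall into three groups. The composites $B_tB_s$ versus $B_sB_{t-1}$ match by the pre-simplicial identity for the Chevalley-Eilenberg complex of the associated $\epsilon$-Lie algebra $\mathfrak{g}_S$, whose essential ingredient is the $\epsilon$-Jacobi identity (1.5) applied to $[x_s,x_t]$ and to the nested brackets. The composites containing two of $L_r$, $P_r$, or $R_r$ (and no bracket) cancel by the bimodule axioms (2.9) and (2.10) together with the generalized left-symmetric identity (2.2); here the key observation is that the definition of the $S$-action on $C^{1}(S,M)$ in Proposition 2.7 was designed precisely so that $L\cdot L$, $L\cdot P$, $L\cdot R$, $P\cdot R$, etc.\ pair up correctly. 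Finally, the mixed composites involving exactly one bracket summand $B_r$ together with one of $L_r$, $P_r$, $R_r$ are matched by the linearity of the action in its $\mathfrak{g}_S$-argument, using the identity $L_{[x_s,x_t]} = L_sL_t - \epsilon(\alpha_s,\alpha_t)L_tL_s$, which is exactly the $\epsilon$-Lie admissibility of $S$ via (2.4).

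The principal obstacle is not conceptual but the accurate bookkeeping of the commutation factors $\epsilon$: each removal of an argument, each permutation across $f$, and each nesting of actions contributes a factor, and these must agree on both sides up to (1.1)--(1.3). To keep the computation tractable I would normalize every coefficient to a single product $\prod\epsilon(\xi_k,\eta_k)$ by repeated use of the bimultiplicativity of $\epsilon$ before comparing, so that the index shift $s<t \mapsto s<t-1$ on the right-hand side (which arises because deleting $x_s$ first reindexes $x_t$ to position $t-1$) is absorbed into a rewriting of $\varepsilon_{a,b}$'s. After this normalization the verification in each of the three groups above reduces to the identities already noted, and the proposition follows.
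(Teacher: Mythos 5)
Your proposal is correct and follows essentially the same route as the paper: both proofs expand $(D_tD_sf)$ and $(D_sD_{t-1}f)$ via (3.1) on homogeneous arguments and match terms, with the cancellations coming from the generalized left-symmetric identity, the two bimodule axioms, the $\epsilon$-Jacobi identity, and bimultiplicativity of $\epsilon$; the paper simply displays the full expansions and leaves the final matching as ``tedious calculations.'' Your $L,P,R,B$ decomposition is a cleaner bookkeeping device than the paper's, and the only nitpick is that the identity $L_{[x_s,x_t]}=L_sL_t-\epsilon(\alpha_s,\alpha_t)L_tL_s$ on $M$ is the bimodule axiom (2.9) rather than a consequence of (2.4) alone.
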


\begin{proof}
With the notations as above, for $s<t, t>i,$ $D_{t}D_{s}=D_{s}D_{t-1}=0.$
For $1\leq s<t\leq i,$ by (3.1), we have
\begin{eqnarray*}
&&(D_{t}D_{s}f)(x_{1},\cdots,x_{i}, x_{i+1})\\
&&=\epsilon(\beta+\alpha_{1}+\cdots+\alpha_{t-1},\alpha_{t})x_{t}\cdot(D_{s}f)(x_{1},\cdots,\hat{x}_{t},\cdots,x_{i}, x_{i+1})-\\
&&\epsilon(\alpha_{t},\alpha_{t+1}+\cdots+\alpha_{i})(D_{s}f)(x_{1},\cdots,\hat{x}_{t},\cdots,x_{i},x_t\cdot x_{i+1})+\\
&&\epsilon(\alpha_{t},\alpha_{t+1}+\cdots+\alpha_{i})(D_{s}f)(x_{1},\cdots,\hat{x}_{t},\cdots,x_{i},x_t)\cdot x_{i+1}-\\
&&\sum _{t< j}\epsilon(\alpha_{t},\alpha_{t+1}
+\cdots+\alpha_{j-1})(D_{s}f)(x_{1},\cdots,\hat{x}_{t},\cdots,x_{j-1},[x_{t},x_{j}],\cdots,x_{i},x_{i+1}),\end{eqnarray*}
where
\begin{eqnarray*}
&&x_t\cdot (D_{s}f)(x_{1},\cdots,\hat{x}_{t},\cdots,x_{i},x_{i+1})\\
&=&\epsilon(\beta+\alpha_{1}+\cdots+\alpha_{s-1},\alpha_{s})x_t\cdot (x_{s}\cdot f(x_{1},\cdots,\hat{x}_{s},\cdots,\hat{x}_{t},\cdots,x_{i},x_{i+1}))\\
&&
-\epsilon(\alpha_{s},\alpha_{s+1}+\cdots+\hat\alpha_{t}+\cdots+\alpha_{i})x_t\cdot f(x_{1},\cdots,\hat{x}_{s},\cdots,\hat{x}_{t},\cdots,x_{i},x_s\cdot x_{i+1})\\
&&+\epsilon(\alpha_{s},\alpha_{s+1}+\cdots+\hat\alpha_{t}+\cdots+\alpha_{i})x_t\cdot
(f(x_{1},\cdots,\hat{x}_{s},\cdots,\hat{x}_{t},\cdots,x_{i},x_s)\cdot x_{i+1})\\
&&-\sum _{s< j<t}\epsilon(\alpha_{s},\alpha_{s+1}
+\cdots+\alpha_{j-1})x_t\cdot f(x_{1},\cdots,\hat{x}_{s},\cdots,[x_{s},x_{j}],\cdots,\hat{x}_{t},\cdots,x_{i}, x_{i+1})\\
&&-\sum _{s<t<j}\epsilon(\alpha_{s},\alpha_{s+1}+\cdots+\hat\alpha_{t}+\cdots+\alpha_{j-1})x_t\cdot f(x_{1},\cdots,\hat{x}_{s},\cdots,\hat{x}_{t},\cdots,[x_{s},x_{j}],\cdots, x_{i+1}),\end{eqnarray*}
\begin{eqnarray*}
&&(D_{s}f)(x_{1},\cdots,\hat{x}_{t},\cdots,x_{i},x_t\cdot x_{i+1})\\
&=&\epsilon(\beta+\alpha_{1}+\cdots+\alpha_{s-1},\alpha_{s})x_{s}\cdot f(x_{1},\cdots,\hat{x}_{s},\cdots,\hat{x}_{t},\cdots,x_{i},x_t\cdot x_{i+1})\\
&&-
\epsilon(\alpha_{s},\alpha_{s+1}+\cdots+\hat\alpha_{t}+\cdots+\alpha_{i})f(x_{1},\cdots,\hat{x}_{s},\cdots,\hat{x}_{t},\cdots,x_{i},x_s\cdot(x_t\cdot x_{i+1}))\\
&&+\epsilon(\alpha_{s},\alpha_{s+1}+\cdots+\hat\alpha_{t}+\cdots+\alpha_{i})
f(x_{1},\cdots,\hat{x}_{s},\cdots,\hat{x}_{t},\cdots,x_{i},x_s)\cdot(x_t\cdot x_{i+1})\\
&&-\sum _{s< j<t}\epsilon(\alpha_{s},\alpha_{s+1}
+\cdots+\alpha_{j-1})f(x_{1},\cdots,\hat{x}_{s},\cdots,[x_{s},x_{j}],\cdots,\hat{x}_{t},\cdots,x_{i},x_t\cdot x_{i+1})\\
&&-\sum _{s<t< j}\epsilon(\alpha_{s},\alpha_{s+1}+\cdots+\hat\alpha_{t}+\cdots+\alpha_{j-1})f(x_{1},\cdots,\hat{x}_{s},\cdots,\hat{x}_{t},\cdots,[x_{s},x_{j}],\cdots,x_t\cdot x_{i+1}),\end{eqnarray*}
\begin{eqnarray*}
&&(D_{s}f)(x_{1},\cdots,\hat{x}_{t},\cdots,x_{i},x_t)\cdot x_{i+1}\\
&=&\epsilon(\beta+\alpha_{1}+\cdots+\alpha_{s-1},\alpha_{s})(x_{s}\cdot f(x_{1},\cdots,\hat{x}_{s},\cdots,\hat{x}_{t},\cdots,x_{i},x_t))\cdot x_{i+1}\\
&&-
\epsilon(\alpha_{s},\alpha_{s+1}+\cdots+\hat\alpha_{t}+\cdots+\alpha_{i})f(x_{1},\cdots,\hat{x}_{s},\cdots,\hat{x}_{t},\cdots,x_{i},x_s\cdot x_t)\cdot x_{i+1}\\
&&+\epsilon(\alpha_{s},\alpha_{s+1}+\cdots+\hat\alpha_{t}+\cdots+\alpha_{i})
(f(x_{1},\cdots,\hat{x}_{s},\cdots,\hat{x}_{t},\cdots,x_{i},x_s)\cdot x_t)\cdot x_{i+1}\\
&&-\sum _{s< j<t}\epsilon(\alpha_{s},\alpha_{s+1}
+\cdots+\alpha_{j-1})f(x_{1},\cdots,\hat{x}_{s},\cdots,[x_{s},x_{j}],\cdots,\hat{x}_{t},\cdots,x_{i},x_t)\cdot x_{i+1}\\
&&-\sum _{s<t< j}\epsilon(\alpha_{s},\alpha_{s+1}+\cdots+\hat\alpha_{t}+\cdots+\alpha_{j-1})f(x_{1},\cdots,\hat{x}_{s},\cdots,\hat{x}_{t},\cdots,[x_{s},x_{j}],\cdots,x_t)\cdot x_{i+1},\end{eqnarray*}
\begin{eqnarray*}
&&\sum _{t< j}(D_{s}f)(x_{1},\cdots,\hat{x}_{t},\cdots,x_{j-1},[x_{t},x_{j}],\cdots,x_{i},x_{i+1})\\
&=&\sum _{t< j}\epsilon(\beta+\alpha_{1}+\cdots+\alpha_{s-1},\alpha_{s})x_{s}\cdot f(x_{1},\cdots,\hat{x}_{s},\cdots,\hat{x}_{t},\cdots,[x_{t},x_j],\cdots,x_i, x_{i+1})\\
&&
-\sum _{t< j}\epsilon(\alpha_{s},\alpha_{s+1}+\cdots+\alpha_{i})f(x_{1},\cdots,\hat{x}_{s},\cdots,\hat{x}_{t},\cdots,[x_{t},x_{j}],\cdots,x_{i},x_s\cdot x_{i+1})\\
&&+\sum _{t< j}\epsilon(\alpha_{s},\alpha_{s+1}+\cdots+\alpha_{i})
f(x_{1},\cdots,\hat{x}_{s},\cdots,\hat{x}_{t},\cdots,[x_{t},x_{j}],\cdots,x_{i},x_s)\cdot x_{i+1}\\
&&-\sum _{s< j_1<t<j}\epsilon(\alpha_{s},\alpha_{s+1}
+\cdots+\alpha_{j_1-1})f(x_{1},\cdots,\hat{x}_{s},\cdots,[x_{s},x_{j_1}],\cdots,\hat{x}_{t},\cdots,[x_{t},x_{j}],\cdots, x_{i+1})\\
&&-\sum _{s< j_1,t<j<j_1}\epsilon(\alpha_{s},\alpha_{s+1}
+\cdots+\alpha_{j_1-1})f(x_{1},\cdots,\hat{x}_{s},\cdots,\hat{x}_{t},\cdots,[x_{t},x_{j}],\cdots,[x_{s},x_{j_1}],\cdots, x_{i+1})\\
&&-\sum _{t< j}\epsilon(\alpha_{s},\alpha_{s+1}+\cdots+\hat\alpha_{t}+\cdots+\alpha_{j-1})f(x_{1},\cdots,\hat{x}_{s},\cdots,\hat{x}_{t},\cdots,[x_{s},[x_{t},x_{j}]], \cdots, x_{i+1})\\
&&-\sum _{s<j_1,t< j_1< j}\epsilon(\alpha_{s},\alpha_{s+1}+\cdots+\hat\alpha_{t}+\cdots+\alpha_{j_1-1})\\
&&~~~~~~\times f(x_{1},\cdots,\hat{x}_{s},\cdots,\hat{x}_{t},\cdots,[x_{s},x_{j_1}],\cdots,[x_{t},x_{j}], \cdots,x_{i+1}).
\end{eqnarray*}

In a similar way, we have
\begin{eqnarray*}
&&(D_{s}D_{t-1}f)(x_{1},\cdots,x_{i},x_{i+1})\\
&=&\epsilon(\beta+\alpha_{1}+\cdots+\alpha_{s-1},\alpha_{s})x_{s}\cdot(D_{t-1}f)(x_{1},\cdots,\hat{x}_{s},\cdots,x_{i},x_{i+1})\\
&&-
\epsilon(\alpha_{s},\alpha_{s+1}+\cdots+\alpha_{i})(D_{t-1}f)(x_{1},\cdots,\hat{x}_{s},\cdots,x_{i},x_s\cdot x_{i+1})\\
&&+\epsilon(\alpha_{s},\alpha_{s+1}
+\cdots+\alpha_{i})
(D_{t-1}f)(x_{1},\cdots,\hat{x}_{s},\cdots,x_{i},x_s)\cdot x_{i+1}\\
&&-\sum _{s< j}\epsilon(\alpha_{s},\alpha_{s+1}
+\cdots+\alpha_{j-1})
(D_{t-1}f)(x_{1},\cdots,\hat{x}_{s},\cdots,x_{j-1},[x_{s},x_{j}],\cdots,x_{i},x_{i+1}),\end{eqnarray*}
where
\begin{eqnarray*}
&&x_s\cdot (D_{t-1}f)(x_{1},\cdots,\hat{x}_{s},\cdots,x_{i},x_{i+1})\\
&=&\epsilon(\beta+\alpha_{1}+\cdots+\hat\alpha_{s}+\cdots+\alpha_{t-1},\alpha_{t})x_s\cdot (x_{t}\cdot f(x_{1},\cdots,\hat{x}_{s},\cdots,\hat{x}_{t},\cdots,x_{i},x_{i+1}))\\
&&
-\epsilon(\alpha_{t},\alpha_{t+1}+\cdots+\alpha_{i})x_s\cdot f(x_{1},\cdots,\hat{x}_{s},\cdots,\hat{x}_{t},\cdots,x_{i},x_t\cdot x_{i+1})\\
&&+\epsilon(\alpha_{t},\alpha_{t+1}+\cdots+\alpha_{i})x_s\cdot
(f(x_{1},\cdots,\hat{x}_{s},\cdots,\hat{x}_{t},\cdots,x_{i},x_t)\cdot x_{i+1})\\
&&-\sum _{t<j}\epsilon(\alpha_{t},\alpha_{t+1}+\cdots+\alpha_{j-1})x_s\cdot f(x_{1},\cdots,\hat{x}_{s},\cdots,\hat{x}_{t},\cdots,[x_{t},x_{j}],\cdots,x_i, x_{i+1}),\end{eqnarray*}
\begin{eqnarray*}
&&(D_{t-1}f)(x_{1},\cdots,\hat{x}_{s},\cdots,x_{i},x_s\cdot x_{i+1})\\
&=&\epsilon(\beta+\alpha_{1}+\cdots+\hat\alpha_{s}+\cdots+\alpha_{t-1},\alpha_{t})x_{t}\cdot f(x_{1},\cdots,\hat{x}_{s},\cdots,\hat{x}_{t},\cdots,x_{i},x_s\cdot x_{i+1})\\
&&-
\epsilon(\alpha_{t},\alpha_{t+1}+\cdots+\alpha_{i})f(x_{1},\cdots,\hat{x}_{s},\cdots,\hat{x}_{t},\cdots,x_{i},x_t\cdot(x_s\cdot x_{i+1}))\\
&&+\epsilon(\alpha_{t},\alpha_{t+1}+\cdots+\alpha_{i})
f(x_{1},\cdots,\hat{x}_{s},\cdots,\hat{x}_{t},\cdots,x_{i},x_t)\cdot(x_s\cdot x_{i+1})\\
&&-\sum _{t< j}\epsilon(\alpha_{t},\alpha_{t+1}+\cdots+\alpha_{j-1})f(x_{1},\cdots,\hat{x}_{s},\cdots,\hat{x}_{t},\cdots,[x_{t},x_{j}],\cdots,x_i,x_s\cdot x_{i+1}),\end{eqnarray*}
\begin{eqnarray*}
&&(D_{t-1}f)(x_{1},\cdots,\hat{x}_{s},\cdots,x_{i},x_s)\cdot x_{i+1}\\
&=&\epsilon(\beta+\alpha_{1}+\cdots+\hat\alpha_{s}+\cdots+\alpha_{t-1},\alpha_{t})(x_{t}\cdot f(x_{1},\cdots,\hat{x}_{s},\cdots,\hat{x}_{t},\cdots,x_{i},x_s))\cdot x_{i+1}\\
&&-
\epsilon(\alpha_{t},\alpha_{t+1}+\cdots+\alpha_{i})f(x_{1},\cdots,\hat{x}_{s},\cdots,\hat{x}_{t},\cdots,x_{i},x_t\cdot x_s)\cdot x_{i+1}\\
&&+\epsilon(\alpha_{t},\alpha_{t+1}+\cdots+\alpha_{i})
(f(x_{1},\cdots,\hat{x}_{s},\cdots,\hat{x}_{t},\cdots,x_{i},x_t)\cdot x_s)\cdot x_{i+1}\\
&&-\sum _{t< j}\epsilon(\alpha_{t},\alpha_{t+1}+\cdots+\alpha_{j-1})f(x_{1},\cdots,\hat{x}_{s},\cdots,\hat{x}_{t},\cdots,[x_{t},x_{j}],\cdots,x_i,x_s)\cdot x_{i+1}.\end{eqnarray*}
We represent $\sum _{s< j}
(D_{t-1}f)(x_{1},\cdots,\hat{x}_{s},\cdots,x_{j-1},[x_{s},x_{j}],\cdots,x_{i},x_{i+1})$ as a sum of $X_1, X_2$ and $X_3$,
where
\begin{eqnarray*}
X_1
&=&\sum _{s< j<t}\epsilon(\beta+\alpha_{1}+\cdots+\alpha_{t-1},\alpha_{t})
x_{t}\cdot f(x_{1},\cdots,\hat{x}_{s},\cdots,[x_{s},x_{j}],\cdots,\hat{x}_{t},\cdots,x_{i},x_{i+1})\\
&&-\sum _{s< j<t}\epsilon(\alpha_{t},\alpha_{t+1}+\cdots+\alpha_{i})
f(x_{1},\cdots,\hat{x}_{s},\cdots,[x_{s},x_{j}],\cdots,\hat{x}_{t},\cdots,x_{i},x_t\cdot x_{i+1})\\
&&+\sum _{s< j<t}\epsilon(\alpha_{t},\alpha_{t+1}
+\cdots+\alpha_{i})f(x_{1},\cdots,\hat{x}_{s},\cdots,[x_{s},x_{j}],\cdots,\hat{x}_{t},\cdots,x_{i},x_{t})\cdot x_{i+1}\\
&&-\sum _{s<j<t<j_1}\epsilon(\alpha_{t},\alpha_{t+1}
+\cdots+\alpha_{j_1-1})\\
&&~~~~~~\times f(x_{1},\cdots,\hat{x}_{s},\cdots,[x_{s},x_{j}],\cdots,\hat{x}_{t},\cdots,[x_{t},x_{j_1}],\cdots,x_{i},x_{i+1}),\end{eqnarray*}
\begin{eqnarray*}
X_2&=&
\epsilon(\beta+\alpha_{1}+\cdots+\hat\alpha_{s}+\cdots+\alpha_{t-1},\alpha_{s}+\alpha_{t})[x_{s},x_t]\cdot f(x_{1},\cdots,\hat{x}_{s},\cdots,\hat{x}_{t},\cdots, x_{i+1})\\
&&
-\epsilon(\alpha_{s}+\alpha_{t},\alpha_{t+1}+\cdots+\alpha_{i})f(x_{1},\cdots,\hat{x}_{s},\cdots,\hat{x}_{t},\cdots,x_{i},[x_{s},x_{t}]\cdot x_{i+1})\\
&&+\epsilon(\alpha_{s}+\alpha_{t},\alpha_{t+1}+\cdots+\alpha_{i})
f(x_{1},\cdots,\hat{x}_{s},\cdots,\hat{x}_{t},\cdots,x_{i},[x_{s},x_{t}])\cdot x_{i+1}\\
&&-\sum_{t<j}\epsilon(\alpha_{s}+\alpha_{t},\alpha_{t+1}
+\cdots+\alpha_{j-1})f(x_{1},\cdots,\hat{x}_{s},\cdots,\hat{x}_{t},\cdots,[[x_s,x_{t}],x_{j}],\cdots, x_{i+1}),\end{eqnarray*}
\begin{eqnarray*}
X_3&=&\sum _{s<t< j}\epsilon(\beta+\alpha_{1}+\cdots+\hat\alpha_{s}+\cdots+\alpha_{t-1},\alpha_{t})x_{t}\cdot f(x_{1},\cdots,\hat{x}_{s},\cdots,\hat{x}_{t},\cdots,[x_{s},x_j],\cdots, x_{i+1})\\
&&
-\sum _{s<t< j}\epsilon(\alpha_{t},\alpha_{t+1}+\cdots+\alpha_{i}+\alpha_{s})f(x_{1},\cdots,\hat{x}_{s},\cdots,\hat{x}_{t},\cdots,[x_{s},x_{j}],\cdots,x_{i},x_t\cdot x_{i+1})\\
&&+\sum _{s<t< j}\epsilon(\alpha_{t},\alpha_{t+1}+\cdots+\alpha_{i}+\alpha_{s})
f(x_{1},\cdots,\hat{x}_{s},\cdots,\hat{x}_{t},\cdots,[x_{s},x_{j}],\cdots,x_{i},x_t)\cdot x_{i+1}\\
&&-\sum _{ t<j_1<j}\epsilon(\alpha_{t},\alpha_{t+1}
+\cdots+\alpha_{j_1-1})f(x_{1},\cdots,\hat{x}_{s},\cdots,\hat{x}_{t},\cdots,[x_{t},x_{j_1}],\cdots,[x_{s},x_{j}],\cdots, x_{i+1})\\
&&-\sum _{t< j}\epsilon(\alpha_{t},\alpha_{t+1}+\cdots+\alpha_{j-1})f(x_{1},\cdots,\hat{x}_{s},\cdots,\hat{x}_{t},\cdots,[x_{t},[x_{s},x_{j}]], \cdots, x_{i+1})\\
&&-\sum _{t<j<j_1}\epsilon(\alpha_{t},\alpha_{t+1}
+\cdots+\alpha_{j_1-1}+\alpha_{s})\\
&&~~~~~~\times f(x_{1},\cdots,\hat{x}_{s},\cdots,\hat{x}_{t},\cdots,[x_{s},x_{j}],\cdots,[x_{t},x_{j_1}],\cdots, x_{i+1}).
\end{eqnarray*}

Using the identities in the definitions of a generalized left-symmetric algebra and a bimodule and by  tedious calculations, we obtain  $D_{t}D_{s}=D_{s}D_{t-1}, s<t$.
\end{proof}

\begin{coro}
Let $d=-\sum_{i}(-1)^{i}D_{i}$, then $d^{2}=0$.
\end{coro}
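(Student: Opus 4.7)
The plan is to apply the standard alternating-sum-of-face-maps argument: since Proposition 3.1 equips $\widetilde{C}(S,M)$ with an $\epsilon$-pre-simplicial structure, the relation $D_tD_s=D_sD_{t-1}$ for $s<t$ should translate directly into $d^2=0$ once we split the double sum $d^2=\sum_{i,j}(-1)^{i+j}D_iD_j$ into the two triangles and apply the identity to one of them.

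First I would expand
\begin{equation*}
d^2=\Bigl(-\sum_{i\geq 1}(-1)^iD_i\Bigr)\Bigl(-\sum_{j\geq 1}(-1)^jD_j\Bigr)=\sum_{i,j\geq 1}(-1)^{i+j}D_iD_j,
\end{equation*}
noting that on any fixed component $C^n(S,M)$ only finitely many terms contribute, since $D_t=0$ on $C^n(S,M)$ whenever $t>n$ by (3.2), so there are no convergence issues. Then I would split
\begin{equation*}
d^2=\sum_{i>j\geq 1}(-1)^{i+j}D_iD_j+\sum_{1\leq i\leq j}(-1)^{i+j}D_iD_j.
\end{equation*}

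Next I would attack the first sum using Proposition 3.1. When $i>j\geq 1$ we may take $s=j$, $t=i$ so $s<t$, and the pre-simplicial identity gives $D_iD_j=D_jD_{i-1}$. Re-indexing by $i'=j$ and $j'=i-1$, the condition $i>j\geq 1$ becomes $1\leq i'\leq j'$ (with both indices $\geq 1$), and the sign becomes $(-1)^{i+j}=(-1)^{(j'+1)+i'}=-(-1)^{i'+j'}$. Therefore
\begin{equation*}
\sum_{i>j\geq 1}(-1)^{i+j}D_iD_j=-\sum_{1\leq i'\leq j'}(-1)^{i'+j'}D_{i'}D_{j'},
\end{equation*}
which exactly cancels the second sum, yielding $d^2=0$.

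There is essentially no obstacle here: all the analytic content was absorbed into the proof of Proposition 3.1, and what remains is purely combinatorial bookkeeping on the index substitution and the sign $(-1)^{(j'+1)+i'}=-(-1)^{i'+j'}$, which is precisely the arithmetic that makes the alternating-sum construction of a differential from a pre-simplicial object work in any category. The only thing one must double-check is that the re-indexed sum has the correct range of summation, and that the diagonal $i=j$ terms are handled by being included in the second sum rather than the first.
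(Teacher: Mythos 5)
Your proposal is correct and is exactly the intended argument: the paper offers no written proof of Corollary 3.2, treating $d^2=0$ as the standard immediate consequence of the $\epsilon$-pre-simplicial identity $D_tD_s=D_sD_{t-1}$ ($s<t$) established in Proposition 3.1, and your triangle decomposition with the re-indexing $i'=j$, $j'=i-1$ and the sign flip $(-1)^{(j'+1)+i'}=-(-1)^{i'+j'}$ is precisely that consequence carried out. The finiteness remark via (3.2) is a correct and worthwhile detail.
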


Define $C^{0}(S,M):=\{m\in M| (ab)m=a(bm) \textrm{ for all } a,b\in S\}$, it is easy to check that
$C^{0}(S,M)$ is an $S$-submodule of $M$. By Corollary 3.2, the   cochain complex $\widetilde{C}(S,M)=\oplus_{i> 0}C^{i}(S,M)$ with its coboundary operator $d$
can be extended to a new cochain complex $C(S,M)=\oplus_{i\geq0}C^{i}(S,M)$ if we define $d:C^{0}(S,M)\rightarrow C^{1}(S,M)$ by
$$d(f)(x)=\epsilon(\beta,\alpha)x\cdot f-f\cdot x$$ for $x\in S_{\alpha}$ and $f\in C^{0}(S,M)\cap M_{\beta}$. Hence

\begin{prop} The space $C(S,M)=\oplus_{i\geq 0}C^{i}(S,M)$ is a cochain complex under the following  coboundary operator $d$ of degree zero
with respect to the $\Gamma$-gradation of $C(S,M)$,
$$d(f)(x)=\epsilon(\beta,\alpha)x\cdot f-f\cdot x,$$
where $x\in S_{\alpha}$,  $f\in C^{0}(S,M)\cap M_{\beta}$ and
\begin{eqnarray*}
&&(df)(x_{1},\cdots,x_{i},x_{i+1})\\
&=&-\sum_{t=1}^{i}(-1)^{t}\epsilon(\beta+\alpha_{1}+\cdots+\alpha_{t-1},\alpha_{t})
x_{t}\cdot f(x_{1},\cdots,\hat{x}_{t},\cdots,x_{i},x_{i+1})\\
&&+\sum_{t=1}^{i}(-1)^{t}
\epsilon(\alpha_{t},\alpha_{t+1}+\cdots+\alpha_{i})f(x_{1},\cdots,\hat{x}_{t},\cdots,x_{i},x_t\cdot x_{i+1})\\
&&-\sum_{t=1}^{i}(-1)^{t}\epsilon(\alpha_{t},\alpha_{t+1}
+\cdots+\alpha_{i})f(x_{1},\cdots,\hat{x}_{t},\cdots,x_{i},x_t)\cdot x_{i+1}\\
&&+\sum _{1\leq t< j\leq i}(-1)^{t}\epsilon(\alpha_{t},\alpha_{t+1}
+\cdots+\alpha_{j-1})
f(x_{1},\cdots,\hat{x}_{t},\cdots,x_{j-1},[x_{t},x_{j}],\cdots,x_{i},x_{i+1}),\end{eqnarray*}
where $f \in \textrm{Hom}_{\beta}(\wedge_{\epsilon}^{i-1} S\otimes S,M)$, $x_{s}\in S_{\alpha_{s}} (s=1,\cdots,i+1)$ and  $i\geq1$.
\end{prop}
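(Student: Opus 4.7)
The plan is to combine three ingredients. First, I would verify that the displayed formula for $df$ on $C^{i}(S,M)$ with $i\geq 1$ agrees with the alternating sum $d = -\sum_{t\geq 1}(-1)^{t}D_{t}$ coming from (3.1); this is a term-by-term match in which each of the four summands in $D_{t}f$ produces one of the four summands in $(df)(x_{1},\dots,x_{i+1})$, with the sign $-(-1)^{t}$ distributed appropriately. Once this identification is established, Corollary 3.2 immediately gives $d^{2}=0$ on $\widetilde{C}(S,M)=\oplus_{i\geq 1}C^{i}(S,M)$, which covers all cochain degrees $\geq 1$.

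Second, I would check that the formula
\[
d(f)(x) = \epsilon(\beta,\alpha)\,x\cdot f - f\cdot x, \qquad f\in C^{0}(S,M)\cap M_{\beta},\ x\in S_{\alpha},
\]
is $\Gamma$-homogeneous of degree $\beta$ and hence lies in $C^{1}(S,M)$; this is clear from the degrees of the two summands.

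The substantive new step is $d\circ d = 0$ on the newly adjoined piece $C^{0}(S,M)\to C^{1}(S,M)\to C^{2}(S,M)$. I would proceed by direct computation: set $g=df\in \textrm{Hom}_{\beta}(S,M)$ and apply the degree-$1$ instance of the formula above,
\[
(dg)(x_{1},x_{2}) = \epsilon(\beta,\alpha_{1})\,x_{1}\cdot g(x_{2}) - g(x_{1}\cdot x_{2}) + g(x_{1})\cdot x_{2}.
\]
Substituting the explicit expression for $g$ yields six terms. Two of them assemble into $\epsilon(\beta,\alpha_{1}+\alpha_{2})\bigl[x_{1}\cdot(x_{2}\cdot f)-(x_{1}\cdot x_{2})\cdot f\bigr]$, which vanishes by the defining condition of $C^{0}(S,M)$. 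The remaining four terms regroup as
\[
\epsilon(\beta,\alpha_{1})\bigl[(x_{1}\cdot f)\cdot x_{2} - x_{1}\cdot(f\cdot x_{2})\bigr] + \bigl[f\cdot(x_{1}\cdot x_{2}) - (f\cdot x_{1})\cdot x_{2}\bigr].
\]
Applying the bimodule identity (2.11) to the first bracket, and using $\epsilon(\beta,\alpha_{1})\epsilon(\alpha_{1},\beta)=1$, converts the first bracket into $(f\cdot x_{1})\cdot x_{2} - f\cdot(x_{1}\cdot x_{2})$, which exactly cancels the second bracket.

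The principal obstacle here is not conceptual but bookkeeping: three independent $\Gamma$-grades ($\beta$ from $f$ and $\alpha_{1},\alpha_{2}$ from the $x_{i}$) interact, and the right-action terms bring in the opposite commutation factor $\epsilon(\alpha_{1},\beta)$ rather than $\epsilon(\beta,\alpha_{1})$. Tracking exactly which side of each commutation each factor originates from is what makes the cancellation work, and the same vigilance is required when one verifies the $d=-\sum (-1)^{t}D_{t}$ identification on higher $C^{i}$.
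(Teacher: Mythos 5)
Your proposal is correct and follows essentially the same route as the paper: degrees $\geq 1$ are handled by identifying $d$ with $-\sum_t(-1)^tD_t$ and invoking Corollary 3.2, and the only new content is the extension at degree $0$. The paper merely asserts that this extension works, whereas you carry out the $C^{0}\to C^{1}\to C^{2}$ computation explicitly; your cancellation using the defining condition of $C^{0}(S,M)$ together with the bimodule identity (2.11) and $\epsilon(\beta,\alpha_{1})\epsilon(\alpha_{1},\beta)=1$ is exactly right.
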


We conclude this section with introducing the concept of the cohomology spaces of a generalized left-symmetric algebra $S$ with coefficients in an $S$-bimodule $M$.

\begin{defn}{\rm
Let $Z(S,M)=\oplus_{i\geq 0}Z^{i}(S,M)$ with
$$Z^{i}(S,M):=\{f\in C^{i}(S,M)|d(f)=0\}$$ denote the spaces of generalized left-symmetric cocycles and
$B(S,M)=\oplus_{i\geq 0}B^{i}(S,M)$ with
$$B^{i}(S,M):=\{d(g)|g\in C^{i-1}(S,M)\}$$
denote the spaces of generalized left-symmetric coboundaries. Then
$H(S,M)=\oplus_{i\geq 0}H^{i}(S,M)$ with
$$H^{i}(S,M):=Z^{i}(S,M)/B^{i}(S,M)$$
are called the \textit{generalized left-symmetric cohomology spaces} with coefficients in  $M$.}
\end{defn}

One can show that $d$ is a homomorphism of  antisymmetric $S$-bimodule, that is, $d(x\cdot f)=x\cdot (df)$ for $x\in S, f\in C^{i}(S,M)$ and  then $Z^{i}(S,M), B^{i}(S,M)$ are antisymmetric $S$-subbimodules, hence $H^{i}(S,M)$ is an antisymmetric $S$-bimodule.

\section{Connections Between Generalized Left-symmetric and  $\epsilon$-Lie Cohomologies}

\setcounter{equation}{0}
\renewcommand{\theequation}
{4.\arabic{equation}}

\setcounter{theorem}{0}
\renewcommand{\thetheorem}
{4.\arabic{theorem}}

Let $\mathfrak{g}$ be an $\epsilon$-Lie algebra, $\mathfrak{M}$ be a left $\mathfrak{g}$-module and
$C(\mathfrak{g},\mathfrak{M})=\oplus_{i\geq 0}C^{i}(\mathfrak{g},\mathfrak{M})$ be the cochain complex on $\mathfrak{g}$ with coboundary operator $\mathfrak{d}$,
where
\begin{equation}
C^{i}(\mathfrak{g},\mathfrak{M})=\textrm{Hom} (\wedge_{\epsilon}^{i} \mathfrak{g},\mathfrak{M})=
\oplus_{\gamma\in\Gamma}\textrm{Hom}_{\gamma}(\wedge_{\epsilon}^{i} \mathfrak{g},\mathfrak{M}).
\end{equation}
Recall that the standard $\epsilon$-Lie algebra representation $\xi$  on $C^i(\mathfrak{g},\mathfrak{M})$ (see \cite{SZ1998})
is given by
\begin{eqnarray*}
(\xi(x)f)(x_{1},\cdots,x_{i})&=&[x, f(x_{1},\cdots,x_{i})]\\
&&- \sum_{j=1}^{i}\epsilon(\alpha,\beta+\alpha_{1}+\cdots+\alpha_{j-1})f(x_{1},\cdots,x_{j-1},[x,x_{j}],\cdots,x_{i}),
\end{eqnarray*}
where $f\in \textrm{Hom}_{\beta}(\wedge_{\epsilon}^{i} \mathfrak{g},\mathfrak{M})$ and $(x,m)\mapsto [x,m]$ is a representation corresponding to the  $\epsilon$-Lie  module $\mathfrak{M}$.

On the other hand, we endow $ C^{i+1}(S,M)$ as defined in (2.19) with a representation $\rho$ of the associated $\epsilon$-Lie algebra $\mathfrak{g}_{S}$ corresponding to the antisymmetric representation constructed in Proposition 2.10:
\begin{eqnarray*}
&& (\rho(x)f)(x_{1},\cdots,x_{i},x_{i+1})\\
&=& x\cdot f(x_{1},\cdots,x_{i},x_{i+1})-\epsilon(\alpha,\beta+\alpha_1+\cdots+\alpha_i)f(x_{1},\cdots,x_{i},x\cdot x_{i+1})\\
&& +\epsilon(\alpha,\beta+\alpha_1+\cdots+\alpha_i) f(x_{1},\cdots,x_{i},x)\cdot x_{i+1}\\
&& -\sum_{s=1}^{i}\epsilon(\alpha,\beta+\alpha_{1}+\cdots+\alpha_{s-1}) f (x_{1},\cdots,x_{s-1},[x,x_{s}],\cdots,x_{i},x_{i+1}),
\end{eqnarray*}
where  $f\in \textrm{Hom}_{\beta}(\wedge_{\epsilon}^{i} S\otimes S,M)$, $x\in (\mathfrak{g}_{S})_{\alpha}$, $x_{s}\in S_{\alpha_{s}}(s=1,\cdots,i,i+1)$.
In particular, the $\mathfrak{g}_{S}$-module structure on $C^1(S,M)$ is given by
\begin{equation*}
[x, f](x_1)=x\cdot f(x_1)-\epsilon(\alpha,\beta)f(x\cdot x_1)+\epsilon(\alpha,\beta)f(x)\cdot x_{1}.
\end{equation*}

\begin{theorem}
Let $S$ be a generalized left-symmetric algebra and $M$ an $S$-bimodule. If we define
$$\psi:C^{i}(\mathfrak{g}_{S},C^{1}(S,M))\rightarrow C^{i+1}(S,M), ~~i>0,$$
by
$$\psi(f)(x_{1},\cdots,x_{i},x_{i+1})=f(x_{1},\cdots,x_{i})(x_{i+1}),$$
then $\psi$ induces an isomorphism of $\mathfrak{g}_{S}$-modules. Moreover, $\psi$ induces an isomorphism of cochain complexes
$\oplus_{i\geq1} C^{i}(S,M)$ and $\oplus_{i\geq0} C^{i}(\mathfrak{g}_{S},C^{1}(S,M))$. In particular, we have
$$H^{i+1}(S,M)\cong H^{i}_{\textrm{Lie}}(\mathfrak{g}_{S}, C^{1}(S,M)) \textrm{ for all } i>0.$$
\end{theorem}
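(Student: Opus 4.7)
My plan is to exhibit $\psi$ as a graded currying bijection, then check it intertwines the $\mathfrak{g}_{S}$-actions and the coboundary operators; the cohomology isomorphism then follows formally. First, $\psi$ is visibly the $\Gamma$-graded adjunction
$$\textrm{Hom}\bigl(\wedge_{\epsilon}^{i}\mathfrak{g}_{S},\,\textrm{Hom}(S,M)\bigr)\;\cong\;\textrm{Hom}\bigl(\wedge_{\epsilon}^{i}\mathfrak{g}_{S}\otimes S,\,M\bigr),$$
with inverse $g\mapsto\bigl((x_{1},\ldots,x_{i})\mapsto(y\mapsto g(x_{1},\ldots,x_{i},y))\bigr)$. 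Since $\mathfrak{g}_{S}$ and $S$ coincide as $\Gamma$-graded vector spaces, the right-hand side is precisely $C^{i+1}(S,M)$, so $\psi$ is a $\Gamma$-homogeneous $k$-linear bijection.

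Next I would verify that $\psi$ is $\mathfrak{g}_{S}$-equivariant, i.e.\ $\psi(\xi(x)f)=\rho(x)\psi(f)$. Evaluating $\xi(x)f$ at $x_{i+1}$, the only nontrivial piece is $[x,f(x_{1},\ldots,x_{i})](x_{i+1})$, which I would expand using the antisymmetric $\mathfrak{g}_{S}$-module structure on $C^{1}(S,M)$ from Proposition 2.7, namely
$$[x,h](y)\;=\;x\cdot h(y)-\epsilon(\alpha,\gamma)h(x\cdot y)+\epsilon(\alpha,\gamma)h(x)\cdot y,$$
with $\gamma=\beta+\alpha_{1}+\cdots+\alpha_{i}$. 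This expansion contributes the first three summands of $\rho(x)\psi(f)(x_{1},\ldots,x_{i+1})$ in Proposition 2.10; the remaining sum $-\sum_{s}\epsilon(\alpha,\beta+\alpha_{1}+\cdots+\alpha_{s-1})f(x_{1},\ldots,[x,x_{s}],\ldots,x_{i})(x_{i+1})$ matches the fourth, with $\epsilon$-factors agreeing because both sides shift over the identical prefix of degrees.

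The crux is to show $\psi\circ\mathfrak{d}=d\circ\psi$, where $\mathfrak{d}$ is the $\epsilon$-Chevalley-Eilenberg coboundary on $C^{\bullet}(\mathfrak{g}_{S},C^{1}(S,M))$. Writing
$$(\mathfrak{d}f)(x_{1},\ldots,x_{i+1})\;=\;\sum_{t=1}^{i+1}(-1)^{t+1}\epsilon\,[x_{t},f(x_{1},\ldots,\hat{x}_{t},\ldots,x_{i+1})]\;+\;\sum_{t<j}(-1)^{t+j}\epsilon\,f([x_{t},x_{j}],\ldots)$$
(with the usual $\epsilon$-weights) and evaluating at $x_{i+2}$, I would again expand each $[x_{t},\,\cdot\,](x_{i+2})$ via the $C^{1}(S,M)$-action formula above. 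The three pieces of each such expansion line up with the first three sums defining $d$ in Proposition 3.3 (equivalently $d=-\sum_{t}(-1)^{t}D_{t}$ of Corollary 3.2), while the second Chevalley-Eilenberg sum accounts for the fourth $[x_{t},x_{j}]$-sum in $d$. Since $\psi$ is then a graded bijection commuting with coboundaries, it descends to the claimed isomorphism $H^{i}_{\textrm{Lie}}(\mathfrak{g}_{S},C^{1}(S,M))\cong H^{i+1}(S,M)$ for $i>0$.

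The principal obstacle is the third step: systematically tracking the commutation factors against the alternating signs $(-1)^{t}$. One must verify that the $\epsilon$-weights in $\xi$ (shifting by $\beta+\alpha_{1}+\cdots+\alpha_{t-1}$) collapse correctly against those in $d$ (shifting by $\alpha_{t},\alpha_{t+1}+\cdots+\alpha_{j-1}$, etc.) after the Koszul-style reordering implicit in the identification $\wedge_{\epsilon}^{i}\mathfrak{g}_{S}\otimes S\to\wedge_{\epsilon}^{i-1}S\otimes S\otimes S$. Up to this bookkeeping the argument is a direct $\epsilon$-graded analogue of Dzhumadil'daev's classical right-symmetric proof, with each ordinary sign replaced by the appropriate commutation factor.
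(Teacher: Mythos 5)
Your proposal follows essentially the same route as the paper: establish that $\psi$ is a graded bijection (the paper simply notes it has no kernel and is surjective, you phrase it as the tensor--hom adjunction), verify $\psi(\xi(x)f)=\rho(x)\psi(f)$ by expanding $[x,f(x_1,\dots,x_i)](x_{i+1})$ via the $C^1(S,M)$-module structure, verify $\psi\circ\mathfrak{d}=d\circ\psi$ by the analogous expansion, and conclude the isomorphism on cohomology. The paper likewise leaves the commutation-factor bookkeeping in the coboundary compatibility to "an analogous argument," so your identification of that step as the remaining computational crux is accurate and the plan is sound.
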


\begin{proof}
We first prove that for all $x\in \mathfrak{g}_{S}$ and $i>0$, the following diagram is commutative:
\[
\begin{CD}
C^{i}(\mathfrak{g}_{S},C^{1}(S,M)) @>{\xi(x)}>>C^{i}(\mathfrak{g}_{S},C^{1}(S,M))\\
@VV{\psi}V @V{\psi}VV \\
C^{i+1}(S,M) @>{\rho(x)}>> C^{i+1}(S,M),
\end{CD}
\] where $\rho$ and $\xi$ are defined as above. In fact, for  $f\in C^{i}_\beta(\mathfrak{g}_{S},C^{1}(S,M))$, we have
\begin{eqnarray*}
&&\psi(\xi(x)f)(x_{1},\cdots,x_{i},x_{i+1})\\
&=&[x, f(x_{1},\cdots,x_{i})](x_{i+1})\\
&&- \sum_{j=1}^{i}\epsilon(\alpha,\beta+\alpha_{1}+\cdots+\alpha_{j-1})f(x_{1},\cdots,x_{j-1},[x,x_{j}],\cdots,x_{i})(x_{i+1})\\
&=&x\cdot f(x_{1},\cdots,x_{i})(x_{i+1})-\epsilon(\alpha,\beta+\alpha_1+\cdots+\alpha_i)f(x_{1},\cdots,x_{i})(x\cdot x_{i+1})\\
&& +\epsilon(\alpha,\beta+\alpha_1+\cdots+\alpha_i) f(x_{1},\cdots,x_{i})(x)\cdot x_{i+1}\\
&& -\sum_{j=1}^{i}\epsilon(\alpha,\beta+\alpha_{1}+\cdots+\alpha_{j-1}) f (x_{1},\cdots,x_{j-1},[x,x_{j}],\cdots,x_{i})(x_{i+1}),
\end{eqnarray*} and
\begin{eqnarray*}
&&\rho(x)(\psi(f))(x_{1},\cdots,x_{i},x_{i+1})\\
&=&x\cdot f(x_{1},\cdots,x_{i})(x_{i+1})-\epsilon(\alpha,\beta+\alpha_1+\cdots+\alpha_i)f(x_{1},\cdots,x_{i})(x\cdot x_{i+1})\\
&& +\epsilon(\alpha,\beta+\alpha_1+\cdots+\alpha_i) f(x_{1},\cdots,x_{i})(x)\cdot x_{i+1}\\
&& -\sum_{j=1}^{i}\epsilon(\alpha,\beta+\alpha_{1}+\cdots+\alpha_{j-1}) f (x_{1},\cdots,x_{j-1},[x,x_{j}],\cdots,x_{i})(x_{i+1}).
\end{eqnarray*}
Thus $\psi:C^{i}(\mathfrak{g}_{S},C^{1}(S,M))\rightarrow C^{i+1}(S,M)$ is a homomorphism
of $\mathfrak{g}_{S}$-modules. Moreover, $\psi$ has no kernel and it is an epimorphism. That is, $\psi$ is an isomorphism.

With an analogous argument, we have that for all $i\geq 0$, the following diagram is commutative:
\[
\begin{CD}
C^{i}(\mathfrak{g}_{S},C^{1}(S,M)) @>{\mathfrak{d}}>>C^{i+1}(\mathfrak{g}_{S},C^{1}(S,M))\\
@VV{\psi}V @V{\psi}VV \\
C^{i+1}(S,M) @>{d}>> C^{i+2}(S,M).
\end{CD}
\]
Hence the cochain complexes $\oplus_{i\geq0}C^{i}(\mathfrak{g}_{S},C^{1}(S,M))$ and $\oplus_{i\geq0}C^{i+1}(S,M)$ are equivalent and in particular, we have
$H^{i+1}(S,M)\cong H^{i}_{\textrm{Lie}}(\mathfrak{g}_{S}, C^{1}(S,M))$ for all $i>0.$
\end{proof}

\begin{rem}{\rm   Note that $H^{1}(S,M)=Z^{1}(S,M)/B^{1}(S,M)$ and
\begin{eqnarray*}
Z^{1}(S,M)&=&\{f\in C^{1}(S,M)| d (f)=0\}=Z^{0}(\mathfrak{g}_{S},C^{1}(S,M)),\\
B^{1}(S,M)&=&\{dm |m\in M, (xy)m=x(ym)\textrm{ for all } x,y \in S\}.
\end{eqnarray*}
Then it is not difficult to check that the following sequence is exact:
$$0\rightarrow Z^{0}(S,M)\rightarrow C^{0}(S,M)\rightarrow H^{0}(\mathfrak{g}_{S},C^{1}(S,M))\rightarrow H^{1}(S,M)\rightarrow 0.$$
}\end{rem}

Theorem 4.1 means that  the cohomology spaces  $H^{i}(S,M)(i\geq 2)$ can be calculated if we are able to find a way to compute
the corresponding $\epsilon$-Lie cohomology spaces. In particular,

\begin{theorem}
For a left-symmetric superalgebra $S$ and its bimodule $M$, we have
$$H^{i+1}(S,M)\cong H^{i}_{Lie}(\mathfrak{g}_{S}, C^{1}(S,M))\cong \sum_{p+q=i}H^p(\mathfrak{g}_{S}/I,k)\otimes H^q(I,C^{1}(S,M))^{\mathfrak{g}_{S}},$$
where $I$ is a graded  ideal of  $\mathfrak{g}_{S}$ such that $\mathfrak{g}_{S}/I$ is a semisimple Lie algebra  and
$H^q(I,C^{1}(S,M))^{\mathfrak{g}_{S}}$ is a submodule of $H^q(I,C^{1}(S,M))$ annihilated by $\mathfrak{g}_{S}.$
\end{theorem}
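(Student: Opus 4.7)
The plan is to obtain the two isomorphisms in succession by combining results already available. The first isomorphism is immediate: a left-symmetric superalgebra is precisely a $\mathbf{Z}_2$-graded generalized left-symmetric algebra (with $\epsilon(\alpha,\beta)=(-1)^{\alpha\beta}$), as noted in Example 2.2(2), so Theorem 4.1 applies verbatim and yields $H^{i+1}(S,M)\cong H^{i}_{\mathrm{Lie}}(\mathfrak{g}_{S},C^{1}(S,M))$ for every $i>0$. Thus the genuine content of Theorem 4.3 lies entirely in the second isomorphism, which is a statement about the Chevalley-Eilenberg cohomology of the associated Lie superalgebra $\mathfrak{g}_S$ with coefficients in the $\mathfrak{g}_S$-module $C^{1}(S,M)$.

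For the second isomorphism, the plan is to invoke the Hochschild--Serre-type factorization theorem for Lie superalgebras proved in the reference \cite{Bin1986}. That theorem states that if $\mathfrak{g}$ is a Lie superalgebra, $I\trianglelefteq\mathfrak{g}$ is a graded ideal such that the quotient $\mathfrak{g}/I$ is a semisimple Lie algebra, and $V$ is a $\mathfrak{g}$-module, then
\[
H^{n}_{\mathrm{Lie}}(\mathfrak{g},V)\;\cong\;\bigoplus_{p+q=n}H^{p}(\mathfrak{g}/I,k)\otimes H^{q}(I,V)^{\mathfrak{g}_S}.
\]
Applying this with $\mathfrak{g}=\mathfrak{g}_S$ and $V=C^{1}(S,M)$ (which is a bona fide $\mathfrak{g}_S$-module by Proposition 2.7 together with the $\epsilon$-Lie module construction of Section 2) and combining with the first isomorphism gives exactly the claim.

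The steps to carry out in the write-up are therefore: (i) cite Theorem 4.1 specialized to $\Gamma=\mathbf{Z}_2$ to reduce the assertion to a statement about $H^{i}_{\mathrm{Lie}}(\mathfrak{g}_S,C^{1}(S,M))$; (ii) verify that the hypotheses of the super Hochschild--Serre factorization theorem are met---namely that $I$ is a graded ideal, $\mathfrak{g}_S/I$ is a semisimple Lie algebra (both included in the statement), and $C^{1}(S,M)$ carries a well-defined $\mathfrak{g}_S$-module structure (supplied by the constructions of Section 2); (iii) quote the factorization isomorphism from \cite{Bin1986}; and (iv) chain the two isomorphisms together.

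I do not expect any real obstacle: no new computation is required beyond identifying a left-symmetric superalgebra as an instance of the generalized framework and verifying the module-theoretic hypotheses needed to apply the cited factorization theorem. The only point worth care is to make the coefficient module on the Lie side match exactly: one must confirm that the $\mathfrak{g}_S$-module structure on $C^{1}(S,M)$ implicitly used by Theorem 4.1 coincides with the one on which the Hochschild--Serre theorem is applied, so that the invariants $H^{q}(I,C^{1}(S,M))^{\mathfrak{g}_S}$ appearing on the right-hand side refer to the same action throughout. Once this compatibility is noted, the proof reduces to a single sentence combining Theorem 4.1 with the factorization theorem of \cite{Bin1986}.
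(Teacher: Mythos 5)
Your proposal matches the paper's own proof exactly: the paper simply combines Theorem 4.1 with the Hochschild--Serre factorization theorem for Lie superalgebras from \cite{Bin1986}. Your additional care about verifying that the $\mathfrak{g}_S$-module structure on $C^{1}(S,M)$ is consistent across the two results is a sensible refinement, but the route is the same.
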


\begin{proof}
Note that the Hochschild-Serre factorization theorem  in Lie superalgebras (\cite{Bin1986}) and apply Theorem 4.1.
\end{proof}

\section{Deformations of Generalized Left-symmetric Algebras}

\setcounter{equation}{0}
\renewcommand{\theequation}
{5.\arabic{equation}}

\setcounter{theorem}{0}
\renewcommand{\thetheorem}
{5.\arabic{theorem}}

In this section, we extend Gerstenhaber's theory of formal deformation of
algebras (\cite {Ger}) to generalized
left-symmetric algebras.

Let $S = \oplus_{\gamma\in \Gamma}S_\gamma$ be a finite-dimensional
generalized left-symmetric algebra over a field $k$ of characteristic zero  and $k((\lambda))$ denote
the field of fractions for the formal power series ring $k[[\lambda]]$. We extend the coefficient domain from
 $k$ to $k((\lambda))$ and construct a  bilinear map $f_\lambda$ on the vector space $S_\lambda=S\otimes k((\lambda))$ of the form
\begin{equation}
f_{\lambda}(x,y)=x\cdot
y+\lambda F_{1}(x,y)+\lambda^{2}F_{2}(x,y)+\cdots,
\end{equation}
where $F_{i}$ are bilinear maps and we may set $F_{0}(x,y)=x\cdot y$, the product in $S$. Suppose further
that $(x,y)\mapsto f_\lambda(x,y)$ defines a generalized left-symmetric algebra on $S_\lambda$. Then we say that $(S_{\lambda},f_{\lambda})$ is a one-parameter family of deformations of $S$.  It is evident that (2.1) cannot be maintained unless
$f_\lambda$ and
each $F_{i}$ are homogenous elements of degree zero. This is to say, a
deformation must leave the gradation undisturbed.

 On the other hand, the generalized left-symmetric identity (2.2) requires
\begin{equation}
f_{\lambda}(f_{\lambda}(x,y),z)-f_{\lambda}(x,f_{\lambda}(y,z))=\epsilon(\alpha,\beta)\left(f_{\lambda}(f_{\lambda}(y,x),z)-f_{\lambda}(y,f_{\lambda}(x,z))\right)
\end{equation}
for all $x\in S_\alpha,y\in S_\beta,z\in S$  and $\alpha,\beta\in \Gamma.$ In terms of $F_i$, we have  for all non-negative integers
$p$:
\begin{equation}
\sum
_{r+s=p,\\r,s\geq 0}\left(F_{r}(F_{s}(x,y),z)-F_{r}(x,F_{s}(y,z))-\epsilon(\alpha,\beta)\left(F_{r}(F_{s}(y,x),z)-F_{r}(y,F_{s}(x,z))\right)\right)=0.
\end{equation}
 (5.3) are known as integrability conditions.  If $\gamma=1$, we obtain an equation for $F_{1}$:
\begin{eqnarray*}
&&F_{1}(x,y)\cdot z+F_{1}(x\cdot y,z)-x\cdot
F_{1}(y,z)-F_{1}(x,y\cdot
z)\\
&&=\epsilon(\alpha,\beta)(F_{1}(y\cdot
x,z)+F_{1}(y,x)\cdot z-F_{1}(y,x\cdot z)-y\cdot F_{1}(x,z)).\nonumber
\end{eqnarray*}
Hence the first integrability condition states that $F_1$ must be a two-cocycle. An element $F\in Z^2(S,S)$ is said to be integrable if it is
the first term of a one-parameter deformation series (5.1).
We call $F_{1}$ an \textit{infinitesimal deformation}.
Putting $p=2$ in (5.3), we get $$dF_2(x,y,z)=\mu_2(x,y,z),$$
where $$\mu_2(x,y,z)=F_1(F_1(x,y),z)-F_1(x,F_1(y,z))-\epsilon(\alpha,\beta)(F_1(F_1(y,x),z)-F_1(y,F_1(x,z)))$$ and $d$ is the coboundary operator defined as (3.6). One can show that $\mu_2$ is an element of $Z^3(S,S)$ if $F_1$ is in $Z^2(S,S).$ If $F_1$ is integrable, then this
three-cocycle must be a coboundary. Hence the cohomology class of $\mu_2$ is the first obstruction  to the integration of $F_1$.
In general, we have \begin{equation}dF_p(x,y,z)=\mu_p(x,y,z),\end{equation}
where $$\mu_p(x,y,z)=\sum
\left(F_{r}(F_{s}(x,y),z)-F_{r}(x,F_{s}(y,z))-\epsilon(\alpha,\beta)\left(F_{r}(F_{s}(y,x),z)-F_{r}(y, F_{s}(x,z))\right)\right)$$ and
$r+s=p, r,s> 0.$ We can show that if $F_1,\cdots,F_{p-1}$ are chosen such that the integrability condition (5.3) holds, then $\mu_p$
is a three-cocycle. (This is done by appropriate modification of Dzhumadil'daev's proof in \cite{Dzh1999} to take care of the $\Gamma$-gradation.)
Thus, the obstruction to continuing the deformation to  the $p$-th term lies in the
possibility that the cocycle $\mu_p$ might not be a pure coboundary. For this reason,
$H^3(S, S)$ may be regarded as the space of obstruction to deformations of $S$ and if $H^3(S, S)=0$, then all obstructions vanish and
every two-cocycle is integrable.

Now suppose $f_\lambda=\sum\lambda^nF_n$ and $g_\lambda=\sum\lambda^nG_n$ are two one parameter families of
deformations of a generalized left-symmetric algebra $S$. We  say that $f_\lambda$ and $g_\lambda$ are \textit{equivalent}
if there exists a nonsingular linear automorphism $\Phi_\lambda$ of $S_\lambda$ of the form
$$\Phi_\lambda(x) = x + \lambda\varphi_1(x) + \lambda^2\varphi_2(x) + \cdots,$$
where all the $\varphi_i:S\rightarrow S$ are homogenous linear maps of degree zero,
such that for $x,y \in S$
\begin{equation}f_\lambda(x,y)=\Phi^{-1}_\lambda g_\lambda(\Phi_\lambda(x),\Phi_\lambda(y)).\end{equation}
Expanding both sides of (5.5) in powers of $\lambda$, we find
$$F_1(x,y)-G_1(x,y)=d\varphi_1(x,y).$$
Thus, if two deformations are to be equivalent, then their infinitesimal generators must
belong to the same cohomology class in $Z^2(S, S)$.  A deformation $f_\lambda$ is called  \textit{trivial} if it is equivalent to the identity
deformation, that is $F_i=0$ for all $i>0$ in (5.1).

Suppose now that  $f_\lambda$ is a one-parameter family of deformations of $S$ and $F_n (n\geq1)$ is the first nonzero term. Then
it follows from (5.4) that $dF_n=0$. If further $F_n$ is in $B^2(S,S),$  then $F_n=-d\varphi$ for some $\varphi$  in $C^1(S,S).$ Setting
$\Phi_\lambda(x)=x+\lambda\varphi(x),$ we have
$$\Phi_\lambda^{-1}f_\lambda(\Phi_\lambda(x),\Phi_\lambda(y))=x\cdot y+\lambda^{n+1}F'_{n+1}(x,y)+\lambda^{n+2}F'_{n+2}(x,y)+\cdots,$$
where $F'_{n+1},F'_{n+2},\cdots,$ are the two-cochains of degree zero defining a deformation $f'_\lambda$ that is equivalent to $f_\lambda$ and again $F'_{n+1}$ is in $Z^2(S,S).$ Then we obtain the following result.

\begin{prop}
Let $f_\lambda$ be a one-parameter family of deformations of a generalized left-symmetric algebra $S$. Then $f_\lambda$ is equivalent to
$g_\lambda(x,y)=x\cdot y+\lambda^nG_n(x,y)+\cdots,$ where the first non-vanishing cochain $G_n$ is in $Z^2(S,S)$ and is not a pure coboundary.
In particular, if $H^2(S,S)=0$, then every deformation is equivalent to the trivial deformation.
\end{prop}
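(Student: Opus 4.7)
The plan is to prove the result by an iterative reduction, using at each step the construction already indicated in the text preceding the statement. Given a deformation $f_\lambda = x\cdot y + \lambda^n F_n(x,y) + \lambda^{n+1} F_{n+1}(x,y)+\cdots$ with $F_n$ the first non-vanishing cochain, the integrability condition~(5.4) at level $n$ collapses to $dF_n=0$ because $F_1 = \cdots = F_{n-1} = 0$; hence $F_n \in Z^2(S,S)$. If $F_n$ is not a pure coboundary we are done, taking $G_n := F_n$.

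Otherwise, write $F_n = -d\varphi$ for some $\varphi \in C^1(S,S)$ of degree zero and put $\Phi_\lambda := \mathrm{id} + \lambda^n \varphi$, which is invertible in the $\lambda$-adic topology on $\mathrm{End}(S_\lambda)$. A direct expansion (as performed in the paragraph preceding the proposition) shows that the $\lambda^n$-coefficient of $\Phi_\lambda^{-1} f_\lambda(\Phi_\lambda(\cdot), \Phi_\lambda(\cdot))$ equals $F_n + d\varphi = 0$, so the resulting equivalent deformation $f^{(1)}_\lambda$ has first non-vanishing cochain of order $n_1 \geq n+1$. Repeating the step produces either a stage at which the leading cocycle fails to be a coboundary (yielding the desired $g_\lambda$), or an infinite sequence of equivalences $\Phi^{(1)}_\lambda, \Phi^{(2)}_\lambda, \ldots$ with $\Phi^{(k)}_\lambda \equiv \mathrm{id} \pmod{\lambda^{n_k}}$ and $n_k \to \infty$. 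In the latter case the composition $\Phi_\lambda^{(\infty)} := \cdots \circ \Phi^{(2)}_\lambda \circ \Phi^{(1)}_\lambda$ converges in the $\lambda$-adic topology (modulo any fixed $\lambda^N$, only finitely many factors differ from the identity) and defines a formal automorphism intertwining $f_\lambda$ with the trivial deformation $x\cdot y$.

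For the second claim, the hypothesis $H^2(S,S)=0$ forces every leading $F^{(k)}_{n_k} \in Z^2(S,S)$ to be a coboundary, so the iteration never terminates and the limit argument just described shows that $f_\lambda$ is equivalent to the trivial deformation.

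The main obstacle is the $\lambda$-adic convergence of the infinite composition; I expect this to be the only genuinely non-routine point, since the per-step coboundary computation is already carried out in the preceding text and the fact that $F_n$ is a cocycle is immediate from the integrability identity once the earlier terms vanish. The convergence itself is handled by the standard observation that the $k[[\lambda]]$-module structure makes $\mathrm{End}(S_\lambda)$ complete in the $\lambda$-adic topology, so a sequence of endomorphisms agreeing with the identity to increasing orders has a well-defined limit that is automatically invertible.
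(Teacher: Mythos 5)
Your argument is essentially the paper's own: the paper proves this proposition by exactly the iterative gauge-transformation you describe (kill a coboundary leading term $F_n=-d\varphi$ via $\Phi_\lambda=\mathrm{id}+\lambda^n\varphi$, note the new leading term is again a cocycle, and repeat), the per-step computation being carried out in the paragraph immediately preceding the statement. Your write-up is correct and in fact slightly more careful than the paper on two points --- you use $\mathrm{id}+\lambda^{n}\varphi$ where the paper writes $\mathrm{id}+\lambda\varphi$ (which would disturb lower-order terms when $n>1$), and you justify the $\lambda$-adic convergence of the infinite composition needed for the $H^{2}(S,S)=0$ case, which the paper leaves implicit.
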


The study of  deformations
is a way to obtain new algebras. We end this paper with an example
that illustrates  how to determine all simple  left-symmetric superalgebras which can be obtained
by infinitesimal deformations of a given left-symmetric superalgebra.

\begin{exam}{\rm
Consider the  3-dimensional complex left-symmetric superalgebra $S$ with
a homogeneous basis $\{x,y_{1},y_{2}~|~ x\in
S_{\bar{0}},y_{1},y_{2}\in S_{\bar{1}}\}$ satisfying
$$x\cdot x=2x,x\cdot y_{1}=y_{1},x\cdot y_{2}=y_{2},y_{1}\cdot y_{2}=x,y_{2}\cdot
y_{1}=-x.$$
With respect to the basis of $S$ the second cohomology is given by
\begin{eqnarray*}
H^2(S,S)=\{F_1\in C^2(S,S)| F_1(x,x)=ax, F_1(x,y_1)=by_2,
\\ F_1(x,y_2)=cy_1+ay_2,~~ \textrm{others are zero,} ~~a,b,c\in \textbf{C}\}.
\end{eqnarray*}
These two-cocycles satisfy the integrability conditions (5.3).
We have a first-order deformation with $$f_\lambda=F_0+\lambda F_1.$$
Next, we determine  the simple left-symmetric
superalgebra  $(S_\lambda,f_\lambda).$
Note that every nonzero ideal will contain $x$. On the other hand, the ideal generated by $x$ equals $S_\lambda$ if the vectors
$f_\lambda(x, y_1), f_\lambda(x,y_2)$ are linear independent.
Then it is not difficult to classify the simple left-symmetric
superalgebras  constructed by this way: any one of them is
isomorphic to one of the following left-symmetric superalgebras
\begin{enumerate}
  \item ${S}_{\lambda_{1,t}}: f_\lambda(x, x)=(t+1)x, f_\lambda(x, y_{1})=y_{1},
f_\lambda(x, y_{2})=ty_{2}, f_\lambda(y_{1}, y_{2})=x, f_\lambda(y_{2}, y_{1})=-x,0<|t|<1\textrm{ or }t=e^{i\theta},0\leq\theta\leq\pi;$
  \item ${S}_{\lambda_{2}}:f_\lambda(x, x)=2x, f_\lambda(x, y_{1})=y_{1}, f_\lambda(x,
y_{2})=y_{1}+y_{2}, f_\lambda(y_{1}, y_{2})=x, f_\lambda(y_{2}, y_{1})=-x$.
\end{enumerate}
Here ${S}_{\lambda_{1,t}}$ and ${S}_{\lambda_{2}}$ contain all the  3-dimensional complex simple left-symmetric
superalgebras which have been obtained in \cite{ZB2012}. }\end{exam}

\section*{\textit{Acknowledgments}}
This work was supported by NNSF of China (11226051) and the Fundamental Research
Funds for the Central Universities (11QNJJ001).

\end{document}